\newcommand{\ba}{\begin{array}}
\newcommand{\ea}{\end{array}}
\newcommand{\bc}{\begin{center}}
\newcommand{\ec}{\end{center}}
\newcommand{\beqn}[1]{\begin{equation}\label{#1}}
\newcommand{\eeqn}{\end{equation}}
\newcommand{\be}{\begin{equation}}
\newcommand{\ee}{\end{equation}}
\newcommand{\beqnn}{\begin{eqnarray}}
\newcommand{\eeqnn}{\end{eqnarray}}
\newtheorem{theorem}{Theorem}
\newtheorem{corollary}{Corollary}
\newtheorem{lemma}{Lemma}
\newtheorem{definition}{Definition}
\newtheorem{assumption}{Assumption}
\newtheorem{problem}{Problem}
\begin{document}
%
\title{Receding Horizon Consensus of General Linear Multi-agent Systems with Input Constraints: An Inverse Optimality Approach}

%
%
%

\author{Huiping~Li,~\IEEEmembership{Member,~IEEE,}
        Weisheng~Yan,
        Yang~Shi,~\IEEEmembership{Senior member,~IEEE,}
        Fuqiang~Liu
\thanks{
This paper was not presented at any IFAC
meeting. This work was supported by National Natural Science Foundation of China (NSFC) under Grant 61473225;
the Basic Research Foundation of Northwestern Polytechnical University (NPU) under Grant 3102014JCQ01015; the start-up research fund of NPU.}
\thanks{Huiping Li and Weisheng Yan are with the School of Marine Science and Technology,
Northwestern Polytechnical University, Xi'an, China, 710072 (lihuiping@nwpu.edu.cn;peter.huiping@gmail.com);
Yang Shi is with the Department of Mechanical Engineering, University of Victoria, Victoria,
BC, V8W 3P6 Canada (e-mail: yshi@uvic.ca);
Fuqiang Liu is with the Department of Mechanical Engineering, Chongqing University, Chongqing, China (fuqiangliu@yeah.net).}
}

%


\maketitle

\begin{abstract}
It is desirable but challenging to fulfill system constraints and reach optimal performance
in consensus protocol design for practical multi-agent systems (MASs).
This paper investigates the optimal consensus problem for general linear MASs subject to control input constraints.
Two classes of MASs including subsystems with semi-stable and unstable dynamics are considered.
For both classes of MASs without input constraints, the results on designing optimal consensus protocols are first
developed by inverse optimality approach.
Utilizing the optimal consensus protocols, the receding horizon control (RHC)-based consensus strategies are designed
for these two classes of MASs with input constraints. The conditions for assigning the cost functions distributively are derived, based on which the distributed RHC-based consensus frameworks are formulated.
Next, the feasibility and consensus properties of the closed-loop systems are analyzed.
It is shown that 1) the optimal performance indices under the inverse optimal consensus protocols are coupled with the network topologies and the
system matrices of subsystems, but they are different for MASs with semi-stable and unstable subsystems;
2) the unstable modes of subsystems impose more stringent requirements for the parameter design;
3) the designed RHC-based consensus strategies can make the control input constraints fulfilled and ensure consensus for the closed-loop systems in both cases. But for MASs with semi-stable subsystems, the {\em convergent consensus} can be reached.
Finally, two examples are provided to verify the effectiveness of the proposed results.
\end{abstract}

\begin{IEEEkeywords}
Constrained systems, multi-agent systems, receding horizon control (RHC),
discrete-time systems, optimization.
\end{IEEEkeywords}

%
\IEEEpeerreviewmaketitle

\section{INTRODUCTION}\label{sec_introduction}
The consensus problem is one of the most important issues in
multi-agent systems (MASs). It finds many applications in, such as multi-robotic systems,
sensor networks, and power grids, and is also essential to solve some other problems such as formation control,
swarm, and distributed estimation problems.
Many celebrated results have been contributed in the literature of MASs to form the theoretical foundation of consensus problem,
for example, \cite{Reza_04_consensus_TAC}\cite{Moreau_05_TAC_consensus}\cite{Wei_05_TAC_consensus}, just name a few.
Even though much progress has been made in MASs, many practical issues in consensus protocol design
are still left to be explored.

The optimality is a practical requirement in many control systems,
and it is also a desired property for consensus protocol design in MASs. For instance,
a wireless sensor network may be expected to reach consensus in state estimates using
smallest energy as each sensor node has limited battery power.
In addition, the optimal consensus protocol may provide some satisfactory control performance as in LQR.
Another frequently encountered issue would be the control input constraints in MASs.
For example, in a multi-robot system, the control inputs for motors in each robot are not allowed to be too large in order not to ruin the motors, or
the motors may not provide enough power to generate very large control inputs.
Thus, control input constraints should be imposed during the consensus procedure.

It is well known that the receding horizon control (RHC) strategy, also known as model predictive control
is capable of handling system constraints while preserving (sub-)optimal control performance,
and this motivates us to study the constrained consensus problem in an RHC-based framework.
In this paper, we consider two classes of discrete-time linear MASs, i.e., MASs with semi-stable and unstable subsystems.
For both classes of MASs, we first investigate the inverse optimal consensus problem and design optimal consensus protocols.
The main tool for the optimal consensus protocol design is the concept of inverse optimality and set stability.
Based on the designed protocols, we further study the RHC-based consensus problems and investigate the feasibility issue and analyze the achieved consensus property. The main idea utilized in this part are the optimality principle and set stability.

The closely-related literature is reviewed from the following three aspects: 1) Constrained consensus,
2) optimality-based consensus protocol design without constraints,
and 3) RHC-based consensus and cooperative control.
Constrained consensus problem is a very challenging issue. Only few results are reported in the literature,
and most of them deal with MASs with simple integrator dynamics. For example, in \cite{Nedic10TAC_constrained_consensus}, the projected consensus algorithm and subgradient algorithm are proposed for consensus estimate for first-order MASs with convex constraints.
In \cite{Lin14TAC_constrained_consensus}, the consensus problem is investigated for multi-integrators with convex constraints and communication delays. In \cite{Wang14SCL_synchronization}, the synchronization problem of MASs with homogeneous linear dynamics and input saturation is studied
and the synchronization is proved by showing the semi-global stability of error dynamics.

Due to the desired feature of optimal control, the optimal consensus protocol design problem has also received a lot of attention.
For example, the distributed LQR problem is investigated for identical decoupled linear systems and the conditions for achieving global optimality are developed in \cite{Borrelli08TAC_LQR}.
The optimal consensus strategy for discrete-time MASs is proposed in \cite{Johansson08Auto_consensus},
where a negotiation strategy is utilized. In \cite{Cao10TCYB_optimal_consensus},
the LQR-based consensus problem is investigated for multi-integrators by using the Laplacian matrix as the variable in optimization, and the interaction-free and interaction-related cost functions are formulated. In \cite{Hengster14TAC_consensus_optimal}, the inverse optimality is utilized for solving the consensus and synchronization problems for continuous-time MASs.
It is shown by all the aforementioned results that the optimal control performance index is generally coupled with the network topology.

In the literature of RHC strategy for MASs, most of the results have been contributed for cooperative stabilization problems, for example,
\cite{Dunbar_06_Auto_DMPC,Li_Auto_14_DRHC,Franco_08_TAC_DMPC,Richards_IJC_07_RDMPC,Li_TAC_14_RDRHC,Muller_12_DMPC_IJRNC}.
Unlike the cooperative stabilization problem, the consensus problem needs to take special attention to deal with information contained
in network topology. Thus, the RHC-based consensus problem is more challenging, and most of consensus strategies are developed
for MASs with simple dynamics.
In \cite{Ferrari_09_TAC_MPC_consensus}, the RHC-based consensus strategies are proposed for MASs with integrator and double-integrator dynamics, where the concept of optimal path is utilized to prove consensus property. However, this method may not be directly applicable for MASs with higher order dynamics. In \cite{Zhan_13_auto_consensus}, the consensus problem for MASs with integrator is solved by using unconstrained RHC, where multiple-time information needs to be exchanged at each time instant. In \cite{Zhang15TCSI}, the RHC-based consensus problem is studied for MASs with double-integrator and input constraints. The RHC-based consensus problem is investigated for MASs with general linear dynamics in \cite{Huiping14Auto_Consensus}, but the constraints are not considered.

In this paper, we propose a solution to the RHC-based consensus problem for general linear MASs with input constraints.
The main contributions of this paper are three-fold:
\begin{itemize}
\item The global optimal consensus protocols and the conditions for designing such protocols are proposed
      for MASs with semi-stable and unstable subsystems. It is shown that the global optimal performance indices
      are dependent on the network topology and the system dynamics, indicating the difficulty for designing optimal
      cost functions by direct approaches. The developed results not only offer an approach to design optimal consensus protocols
      for unconstrained MASs, but alos provide a way of designing auxiliary consensus protocols for constrained MASs.

\item Novel centralized RHC-based consensus strategies that can fulfill control input constraints are developed for both classes of MASs, where the design of terminal costs and constraints are built on the developed optimal consensus protocols. The conditions for decomposition of cost functions and constraints are provided, based on which the distributed RHC-based consensus strategies are designed for MASs with constraints.

\item The iterative feasibility is proven for the designed RHC-based consensus strategy, and the consensus properties are analyzed for both classes of MASs. We prove that, for the MASs with semi-stable subsystems, the control input constraints are satisfied and the closed-loop system reach {\em convergent consensus}, but for MASs with unstable subsystems, only conventional consensus is guaranteed.
\end{itemize}

The remainder of this paper is organized as follows.
The graph notations and preliminary results on set stability are presented in Section \ref{Sec_notaitons}.
The problem formulation and inverse optimal controller design are given in Section \ref{sec_problem_formulation}.
In Section \ref{Sec_semistable}, the results for designing optimal consensus protocols for MASs with semi-stable subsystems are first developed,
then the RHC-based consensus strategies are presented, and finally, the feasibility and consensus analysis of the closed-loop systems
are provided. The parallel results for MASs with unstable subsystems are given in Section \ref{Sec_General_linear}.
The design conditions for cost functions and imposed network constraints are discussed in Section \ref{Sec_discussions}.
In Section \ref{Sec_simulation}, two numeric examples are provided, and the conclusion remarks are summarized in Section \ref{Sec_conclusion}.

Notation: The superscripts ``${\rm T}$'', ``$-1$'' and ``$\#$'' are denoted by the matrix transposition,
inverse and group inverse, respectively. $\mathbb{R}$ and $\mathbb{Z}$ represent the real numbers and integers, respectively.
$\mathbb{R}_{\geqslant 0}$ and $\mathbb{Z}_{+}$ are denoted by the nonnegative real numbers and integers.
Given a matrix $P$, we use $P>0$ ($P\geqslant0$) to denote its positive-definiteness (semi-positive definiteness).
Given a vector $x\in\mathbb{R}^n$, its Euclidean norm is denoted by $\|x\|$, and its $P$-weighted norm by $\|x\|_{P} \triangleq \sqrt{x^{\rm T}Px}$,
where $P \geqslant 0$.
The distance between $x$ and a set $\mathcal{O}\subseteq\mathbb{R}^n$ is denoted by $|x|_{\mathcal{O}} = \inf_{y\in\mathcal{O}}\|x-y\|$.
Give a matrix $P$, we use $\lambda(P)$ to represent its eigenvalue, ${\rm spec}(P)$ to stand for its spectrum radius,
and $\sigma_{\min}(P)$ and $\sigma_{\max}(P)$ to denote its minimum and maximum nonzero spectrum, respectively.
For a matrix $A$, its range and null space are denoted by ${\rm Ker }(A)$ and ${\rm range}(A)$, respectively.
We write the column operation $[x_1^{\rm T},x_2^{\rm T},\cdots,x_n^{\rm T}]^{\rm T}$ as ${\rm col}(x_1,x_2,\cdots,x_n)$.
Given two sets $A \subseteq B\subseteq \mathbb{R}^{n}$, the difference between the two sets is
defined by $A\setminus B \triangleq \{x| x \in A, x\notin B\}$.
Finally, we use $\otimes$ to denote the Kronecker product operation.

\section{Graph Theory and Preliminary Results}\label{Sec_notaitons}
\subsection{Graph Theory and Notations}
A graph $\mathcal{G}$ is characterized by a triple $\{\mathcal{V}, \mathcal{E}, \mathcal{A}\}$,
where $\mathcal{V} = \{1, \cdots, N\}$ represents the collection of $N$ vertices (nodes),
$\mathcal{E}\subseteq \mathcal{V}\times \mathcal{V}$ is the set of arcs or edges, and
$\mathcal{A} = [a_{ij}]\in\mathbb{R}^{n\times n}$ with $a_{ij}\geqslant 0$ is the weighted adjacency matrix of the graph $\mathcal{G}$.
The edge is represented by a pair $(j,i)$, $j \in\mathcal{V}$ and $i \in\mathcal{V}$, and it is assumed that
the graph contains no-self loop, that is $(i,i)\notin \mathcal{E}$.
A edge $(j,i)$, $i\neq j$ means that there is a communication channel from node $j$ to $i$.
The neighbors of node $i$ are denoted by $\mathcal{N}_i = \{j\in \mathcal{V}|(j,i)\in \mathcal{E}\}$.
For simplicity, the weights in the adjacency matrix are set to be $1$, i.e., $a_{ij}=1$ if $(j,i)\in\mathcal{E}$;
otherwise, $a_{ij} = 0$. Define the in-degree of node $i$ by $deg_i = \sum_{j=1}^{N} a_{ij}$, and the degree matrix
of graph $\mathcal{G}$ is defined by $\mathcal{D} = {\rm diag}(deg_1,\cdots, deg_N)$.
The graph Laplacian matrix is defined by $\mathcal{L} = \mathcal{D} -\mathcal{A}$.
A graph is called undirected if $a_{ij}>0 \iff a_{ji}>0$; otherwise, the graph is called a directed graph (diagraph).
A path from node $i_1$ to $i_k$ is denoted by a sequence $(i_1, i_2), (i_2,i_3), \cdots, (i_{k-1}, i_{k})$, where $(i_{j-1}, i_{j})$ or
$(i_{j}, i_{j-1})\in\mathcal{E}$ with $j=2, \cdots, k$.
A directed path from node $i_1$ to $i_k$ is denoted by a sequence $(i_1, i_2), (i_2,i_3), \cdots, (i_{k-1}, i_{k})$ with $(i_{j-1}, i_{j})\in\mathcal{E}$.

A diagraph is called strongly connected if any two vertices can be connected by a directed path.
A diagraph is said to contain a spanning tree, if there exists a node $i$ such that it can connect to every other node in $\mathcal{V}$ via
a directed path, and the node $i$ is called the root node. A detailed balanced graph is a graph satisfying $\lambda_i a_{ij} = \lambda_j a_{ji}$ for some positive constants $\lambda_1,\cdots, \lambda_M$. A diagraph contains a simple Laplacian if the eigenvalues of its Laplaian matrix are simple.
The Laplacian matrix has a simple zero eigenvalue if and only if a diagraph contains a spanning tree or an undirected graph is connected.
In particular, for an undirected graph, the Laplacian matrix is positive semi-definite and the eigenvalues can be arranged by an ascending order as
$0= \lambda_1(\mathcal{L})\leqslant \lambda_2(\mathcal{L})\leqslant \cdots \leqslant \lambda_N(\mathcal{L})$.

\subsection{Preliminary Results for Set Stability}
Consider a discrete-time system
\begin{equation}\label{equ_nonlinear}
x_{k+1} = f(x_{k}), k\in \mathbb{Z}_{+},
\end{equation}
where $x_k \in\mathbb{R}^{n}$, and $f:\mathbb{R}^{n} \rightarrow \mathbb{R}^{n}$, is continuous.
The solution to (\ref{equ_nonlinear}) is denoted by $x(k,x_0)$ with the initial state $x_0$.
Denote $\mathcal{O}$ by a nonempty closed subset of $\mathbb{R}^n$, and $\mathcal{O}$ is not necessarily compact.
The set $\mathcal{O}$ is said to be forward invariant for the system in (\ref{equ_nonlinear}),
if for any $x_{0} \in \mathcal{O}$, it follows that $x(k,x_{0}) \in \mathcal{O}$, for any $k \geqslant 0$.

Motivated by the set stability definition in \cite{Jiang02SCL_inverse_set_stability}, we present the definition of asymptotic stability as follows.
Before that, some definitions for four classes of functions are recalled.
\begin{definition}\label{def_functions}
A function $\mu:\mathbb{R}_{\geqslant 0} \rightarrow \mathbb{R}_{\geqslant 0}$ is said to be a $\mathcal{K}$-function,
if $\mu(0) = 0$, and it is continuous and strictly increasing; furthermore, if $\lim_{t\rightarrow \infty}\mu(t) = \infty$,
then $\mu$ is called a $\mathcal{K}_\infty$-function. A function $\gamma:\mathbb{R}_{\geqslant 0} \rightarrow \mathbb{R}_{\geqslant 0}$ is said to be a positive function, if $\mu(t)>0$ for all $t>0$ and $\mu(t) = 0$ for $t = 0$. A function $\beta: \mathbb{R}_{\geqslant 0}\times \mathbb{R}_{\geqslant 0} \rightarrow \mathbb{R}_{\geqslant 0}$ is said to be a $\mathcal{KL}$-function, if for any fixed $x\geqslant 0$, $\beta(x,.)$ is decreasing and $\lim_{t\rightarrow \infty} \beta(x,t) = 0$, and for any fixed $t\geqslant 0$, $\beta(.,t)$ is a $\mathcal{K}$-function.
\end{definition}

\begin{definition}\label{def_AS}
For the system in (\ref{equ_nonlinear}), suppose that there is a forward invariant set $\mathcal{O}$.
It is said to be asymptotically stable with respect to the set $\mathcal{O}$, if the following
two conditions hold:
\begin{itemize}
\item [1)] Lyapunov stability: for every $\epsilon>0$, there exists some $\delta>0$ such that,
\begin{equation*}
|x_0|_{\mathcal{O}} < \delta \Rightarrow |x(k,x_0)|_{\mathcal{O}} < \epsilon, \forall k\geqslant 0.
\end{equation*}
\item [2)] Attraction: for $x_0\in\mathcal{X}\subseteq \mathbb{R}^{n}$, $\lim_{k \rightarrow \infty} |x(k,x_0)|_{\mathcal{O}} = 0$.
\end{itemize}
\end{definition}

\begin{theorem}\cite{Jiang02SCL_inverse_set_stability}\label{thm_set_stability}
For the system in (\ref{equ_nonlinear}) with a given forward invariant set $\mathcal{O}\in\mathbb{R}^{n}$, if there exists a continuous function $V:\mathbb{R}^n\rightarrow \mathbb{R}_{\geqslant 0}$, such that
\begin{itemize}
\item [1)] $\alpha_1(|x|_{\mathcal{O}})\leqslant V(x)\leqslant \alpha_2(|x|_{\mathcal{O}})$,
\item [2)] $V(f(x))-V(x)\leqslant -\alpha_3(|x|_{\mathcal{O}})$,
\end{itemize}
for any $x\in\mathcal{X} \subseteq \mathbb{R}^n$, where $\alpha_1$ and $\alpha_2$ are $\mathcal{K}$-function, and $\alpha_3$ is a
positive function, then the system in (\ref{equ_nonlinear}) is asymptotically stable with respect to the set $\mathcal{O}$.
\end{theorem}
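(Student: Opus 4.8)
The plan is to treat $V$ as a Lyapunov function for the closed set $\mathcal{O}$ and verify the two items of Definition~\ref{def_AS} in turn, the whole argument resting on one elementary observation: along any solution $x_k := x(k,x_0)$ that remains in $\mathcal{X}$, condition~2) gives $V(x_{k+1}) - V(x_k) \le -\alpha_3(|x_k|_{\mathcal{O}}) \le 0$, so the scalar sequence $\{V(x_k)\}_{k\ge 0}$ is non-increasing and bounded below by $0$; in particular $V(x_k)\le V(x_0)$ for all $k$, and $V(x_k)$ converges to some limit $V^\star\ge 0$.

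For Lyapunov stability, fix $\epsilon>0$. Since $\alpha_1$ is a $\mathcal{K}$-function it is continuous, strictly increasing, and vanishes at $0$, hence invertible on its range, and likewise $\alpha_2$ (for $|x_0|_{\mathcal{O}}$ small the quantities below lie in the relevant ranges). Combining both sides of condition~1) with the monotonicity of $V$ gives, for every $k\ge0$,
\[
\alpha_1(|x_k|_{\mathcal{O}}) \le V(x_k) \le V(x_0) \le \alpha_2(|x_0|_{\mathcal{O}}),
\]
so $|x_k|_{\mathcal{O}} \le \alpha_1^{-1}\!\big(\alpha_2(|x_0|_{\mathcal{O}})\big)$. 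Choosing $\delta := \alpha_2^{-1}\!\big(\alpha_1(\epsilon)\big)$ (shrunk if necessary so that it is well defined and the resulting trajectory stays in $\mathcal{X}$) then yields $|x_0|_{\mathcal{O}}<\delta \Rightarrow |x_k|_{\mathcal{O}}<\epsilon$ for all $k\ge0$, which is exactly item~1).

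For attraction I would use a telescoping/summability argument. Summing condition~2) from $0$ to $K-1$ gives $\sum_{k=0}^{K-1}\alpha_3(|x_k|_{\mathcal{O}}) \le V(x_0) - V(x_K) \le V(x_0) - V^\star$; letting $K\to\infty$ shows $\sum_{k\ge0}\alpha_3(|x_k|_{\mathcal{O}})$ converges, hence $\alpha_3(|x_k|_{\mathcal{O}})\to 0$. It remains to pass from this to $|x_k|_{\mathcal{O}}\to 0$, which I would do by showing $V^\star=0$ by contradiction: if $V^\star>0$, the right inequality in condition~1) forces $|x_k|_{\mathcal{O}} \ge \alpha_2^{-1}(V(x_k)) \ge \alpha_2^{-1}(V^\star)>0$ for all $k$, while the left inequality gives the a priori bound $|x_k|_{\mathcal{O}} \le \alpha_1^{-1}(V(x_0))$, so the sequence $\{|x_k|_{\mathcal{O}}\}$ stays in a fixed compact interval bounded away from $0$; positivity of $\alpha_3$ there bounds $\alpha_3(|x_k|_{\mathcal{O}})$ below by a positive constant, contradicting $\alpha_3(|x_k|_{\mathcal{O}})\to 0$. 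Therefore $V^\star=0$, and then $\alpha_1(|x_k|_{\mathcal{O}})\le V(x_k)\to 0$ yields $|x_k|_{\mathcal{O}}\to 0$, i.e.\ item~2).

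The main obstacle is that neither $\mathcal{O}$ nor $\mathcal{X}$ is assumed compact and $\alpha_3$ is only required to be a positive function, so care is needed at two points: (i) guaranteeing that the solution starting near $\mathcal{O}$ (resp.\ at $x_0\in\mathcal{X}$) stays inside the region $\mathcal{X}$ where the Lyapunov inequalities hold — here one exploits the a priori bound $|x_k|_{\mathcal{O}}\le\alpha_1^{-1}(V(x_0))$ derived above, together with the (implicitly assumed) forward invariance of $\mathcal{X}$ or of an appropriate sublevel set of $V$ contained in it; and (ii) deducing $|x_k|_{\mathcal{O}}\to 0$ from $\alpha_3(|x_k|_{\mathcal{O}})\to 0$, which requires $\alpha_3$ to be bounded away from $0$ on compact sets keeping away from the origin — automatic for the (continuous) positive-definite functions used in the sequel, or one simply invokes \cite{Jiang02SCL_inverse_set_stability}. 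Once these are handled, the remainder is the routine comparison-function bookkeeping sketched above.
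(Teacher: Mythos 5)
The paper itself offers no proof of this theorem: it is quoted verbatim from the cited reference on set stability, so there is nothing internal to compare against. Your argument is the standard comparison-function proof and is essentially sound: the monotonicity of $V$ along trajectories plus the sandwich $\alpha_1(|x|_{\mathcal{O}})\leqslant V(x)\leqslant \alpha_2(|x|_{\mathcal{O}})$ gives Lyapunov stability with $\delta=\alpha_2^{-1}(\alpha_1(\epsilon))$, and the telescoping of condition 2) plus the contradiction argument on $V^\star$ gives attraction. You also correctly identify the two soft spots in the statement as transcribed (forward invariance of $\mathcal{X}$, and the fact that a bare ``positive function'' $\alpha_3$ need not be bounded below on compact sets away from the origin unless it is continuous), and in the paper's later use both are harmless since $V$ is quadratic and $\alpha_3$ is a quadratic form.

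One technicality you state but do not justify: the a priori bound $|x_k|_{\mathcal{O}}\leqslant \alpha_1^{-1}(V(x_0))$ presupposes that $V(x_0)$ lies in the range of $\alpha_1$, which is guaranteed only if $\alpha_1\in\mathcal{K}_\infty$ or $x_0$ is suitably close to $\mathcal{O}$; with $\alpha_1$ merely of class $\mathcal{K}$, a trajectory starting far from $\mathcal{O}$ has no upper bound on $|x_k|_{\mathcal{O}}$ from condition 1), and then even a continuous $\alpha_3$ need not be bounded below on the unbounded set $\{t\geqslant \alpha_2^{-1}(V^\star)\}$, so your contradiction step could in principle fail. This is really a looseness of the theorem statement itself (the cited result is proved under assumptions that remove it, and in this paper $\alpha_1(s)=\sigma_{\min}(P)s^2$ is of class $\mathcal{K}_\infty$), but a complete self-contained proof should either assume $\alpha_1\in\mathcal{K}_\infty$, restrict attraction to a sublevel set of $V$, or argue the lower bound on $\alpha_3(|x_k|_{\mathcal{O}})$ differently. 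With that caveat made explicit, your proof is acceptable.
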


\section{Problem Formulation}\label{sec_problem_formulation}
Consider an MASs with $M$ agents and the dynamics of each agent $i$ is
\begin{equation}\label{equ_agent}
x_{k+1}^i = A x_k^i + B u_k^i,
\end{equation}
where $x_{k}^i\in\mathbb{R}^n$ is the state, $u_k^i\in\mathbb{R}^m$ is the control input.
The control input is required to fulfill the constraint as
\begin{equation}\label{equ_constraint}
u_k^i \in \mathcal{U}_i,
\end{equation}
where $\mathcal{U}_i$ are compact sets, and contain the origin as their interior points.
Each agent $i$ can communicate with some neighboring agents via the communication network, which is characterized by
a graph $\mathcal{G}$.

The overall augmented system can be written as
\begin{equation}\label{equ_overall_system}
X_{k+1} = (I_M \otimes A) X_k + (I_M\otimes B) U_k,
\end{equation}
where $X_k = {\rm col}(x_k^1, \cdots, x_k^M)$, and $U_k = {\rm col}(u_k^1, \cdots, u_k^M)$.
The system constraint becomes $U_k \in \mathcal{U}$,
where $\mathcal{U} = \mathcal{U}^1\times\cdots\times \mathcal{U}^M$.
\begin{definition}\label{def_consensus}
For the MAS characterized by (\ref{equ_agent}) and the communication graph $\mathcal{G}$, with certain control input $u_k^i$ to close the loop,
it is said to reach consensus, if
\begin{equation*}
\lim_{k\rightarrow \infty} \|x_k^i-x_k^j\| = 0, \forall i,j = 1,\cdots, M.
\end{equation*}
Furthermore, if it reaches consensus, and $\lim_{k\rightarrow \infty} \|x_k^i\|<\infty$, $\forall i = 1,\cdots, M$,
then the MAS is said to reach convergent consensus.
\end{definition}


A necessary property for the system in (\ref{equ_agent}) to achieve consensus is assumed as follows \cite{Ma10TAC_IFF_consensus}.
\begin{assumption}\label{ass_controlability}
The pair $(A, B)$ is controllable.
\end{assumption}

To focus on our main objectives, we restrain our attention to the case of fixed graphs.
Due to the fact that having a spanning tree is a necessary
condition to achieve consensus for general linear MASs, we make the following assumption.
\begin{assumption}\label{ass_spanning_tree}
The graph $\mathcal{G}$ contains a spanning tree.
\end{assumption}

According to Definition \ref{def_consensus}, the MAS in (\ref{equ_agent}) achieves consensus, meaning that
the state for each agent will eventually converge to the consensus set $\mathcal{C}\triangleq \{x^1 = x^2 =\cdots = x^M\}$.
By Definition \ref{def_AS}, the asymptotic stability with respect to the set $\mathcal{C}$ for the closed-loop system in (\ref{equ_overall_system})
ensures the state $X_k$ will enter the set $\mathcal{C}$ when $k\rightarrow \infty$,
implying that the MAS in (\ref{equ_agent}) reaches consensus.
As a result, we have the following result.
\begin{theorem}\label{thm_AS_Consensus}
If the closed-loop system in (\ref{equ_overall_system}) under certain control protocol
is asymptotically stable with respect to the set $\mathcal{C}$,
then the states for the MAS will reach consensus.
\end{theorem}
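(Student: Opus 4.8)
The plan is to unwind the definition of asymptotic stability with respect to $\mathcal{C}$ and to show that the attraction property alone forces the pairwise state differences to vanish. First I would observe that the consensus set $\mathcal{C} = \{{\rm col}(x^1,\dots,x^M) : x^1 = \cdots = x^M\}$ is precisely the closed linear subspace ${\rm range}(\mathbf{1}_M\otimes I_n)\subseteq \mathbb{R}^{nM}$, where $\mathbf{1}_M$ denotes the all-ones column vector. Consequently, for any stacked state $X_k = {\rm col}(x_k^1,\dots,x_k^M)$ the distance $|X_k|_{\mathcal{C}}$ is attained at the orthogonal projection of $X_k$ onto $\mathcal{C}$, and a short computation gives $|X_k|_{\mathcal{C}} = \bigl\| X_k - \mathbf{1}_M\otimes \bar x_k\bigr\| = \bigl(\sum_{i=1}^M \|x_k^i - \bar x_k\|^2\bigr)^{1/2}$, where $\bar x_k = \tfrac{1}{M}\sum_{i=1}^M x_k^i$ is the average state.

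Next I would bound the pairwise differences by this distance: for any $i,j$, $\|x_k^i - x_k^j\| = \|(x_k^i - \bar x_k) - (x_k^j - \bar x_k)\| \le \|x_k^i - \bar x_k\| + \|x_k^j - \bar x_k\| \le 2\,|X_k|_{\mathcal{C}}$, using the elementary bound $\|x_k^\ell - \bar x_k\| \le |X_k|_{\mathcal{C}}$ for each $\ell$, which is immediate from the expression above.

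Finally, since by hypothesis the closed-loop system (\ref{equ_overall_system}) is asymptotically stable with respect to $\mathcal{C}$, the attraction part of Definition \ref{def_AS} yields $\lim_{k\to\infty} |X(k,X_0)|_{\mathcal{C}} = 0$ for the admissible initial conditions $X_0\in\mathcal{X}$. Combining this with the bound above gives $\lim_{k\to\infty}\|x_k^i - x_k^j\| = 0$ for all $i,j = 1,\dots,M$, which is exactly the consensus condition of Definition \ref{def_consensus}. I do not anticipate a genuine obstacle here; the only mildly delicate point is the identification of $|X|_{\mathcal{C}}$ with the projection-based expression, which relies on $\mathcal{C}$ being a closed linear subspace so that the infimum defining $|\cdot|_{\mathcal{C}}$ is attained and coincides with the norm of the component orthogonal to $\mathbf{1}_M\otimes I_n$.
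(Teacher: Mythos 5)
Your proposal is correct and follows the same route as the paper, which simply asserts (in the paragraph preceding the theorem) that asymptotic stability with respect to $\mathcal{C}$ forces the state to approach the consensus set and hence yields consensus in the sense of Definition \ref{def_consensus}. Your explicit identification of $|X_k|_{\mathcal{C}}$ with $\bigl(\sum_{i=1}^M\|x_k^i-\bar x_k\|^2\bigr)^{1/2}$ via the projection onto ${\rm range}(\mathbf{1}_M\otimes I_n)$, and the resulting bound $\|x_k^i-x_k^j\|\leqslant 2|X_k|_{\mathcal{C}}$, merely fills in the details the paper leaves implicit.
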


The inverse optimality approach will be utilized to design the RHC-based consensus strategy.
To that end, we first establish a result for general discrete-time linear systems on how to design a control law such that
it is inverse optimal with respect to certain performance index, and that the closed-loop system is set stable.
The results are given in the following Lemma \ref{lemma_inverse_optimality}. Consider a discrete-time linear system:
\begin{equation}\label{equ_linear_system}
x_{k+1} = A x_k + B u_k.
\end{equation}
\begin{lemma}\label{lemma_inverse_optimality}
For the system in (\ref{equ_linear_system}), if there exist a constant $\gamma >0$, and three symmetric matrices $P\geqslant 0$,
$Q \geqslant 0$ and $R>0$, such that \begin{align}
& A^{\rm T} P A -P - A^{\rm T} P B(R + B^{\rm T} P B)^{-1}B^{\rm T} P A + Q =0,\label{equ_ARE}\\
& \|x_k\|_Q \geqslant \gamma |x_k|_{\mathcal{N}}.\label{equ_set_stability}
\end{align}
Then 1) the closed-loop system is asymptotically stable with respect to the set
$\mathcal{N}$; 2) the state feedback controller $u_k = \phi(x_k) \triangleq -(B^{\rm T} P B+R)^{-1} B^{\rm T} P A x_k$ is optimal with
respect to the performance index $J(x_0, u_k) = \sum_{k=0}^{\infty} L(x_k, u_k)$ with $L(x_k, u_k) = \|x_k\|_{Q}^2 + \|u_k\|_R^2$;
3) the optimal performance index $J^*(x_0,\phi(x_k)) = V(x_0)$, where $V(x_k) = x_k^{\rm T} P x_k$ and $\mathcal{N} = {\rm Ker(P)}$.
\end{lemma}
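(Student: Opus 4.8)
\emph{Proof strategy.} The plan is to treat this as an inverse optimal control problem handled by discrete-time dynamic programming and to obtain the set-stability claim from Theorem~\ref{thm_set_stability}. Everything hinges on a completion-of-squares computation showing that $V(x)=x^{\rm T}Px$ solves the discrete Hamilton--Jacobi--Bellman equation for the stage cost $L(x,u)=\|x\|_Q^2+\|u\|_R^2$. First I would expand
\[
L(x,u)+V(Ax+Bu)=x^{\rm T}(Q+A^{\rm T}PA)x+u^{\rm T}(R+B^{\rm T}PB)u+2x^{\rm T}A^{\rm T}PBu ,
\]
and, since $R+B^{\rm T}PB>0$, minimize over $u$: the unique minimizer is $u=\phi(x)=-(B^{\rm T}PB+R)^{-1}B^{\rm T}PAx$, with minimum value $x^{\rm T}\bigl(Q+A^{\rm T}PA-A^{\rm T}PB(R+B^{\rm T}PB)^{-1}B^{\rm T}PA\bigr)x$, which equals $x^{\rm T}Px=V(x)$ exactly by the Riccati equation~(\ref{equ_ARE}). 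Writing $f_{\rm cl}(x)=(A+B\phi)x$ for the closed loop, this yields the key identity $V(f_{\rm cl}(x))-V(x)=-\|x\|_Q^2-\|\phi(x)\|_R^2$.

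For claim~1) I would invoke Theorem~\ref{thm_set_stability} with Lyapunov candidate $V$ and $\mathcal{O}=\mathcal{N}={\rm Ker}(P)$. Forward invariance of $\mathcal{N}$ follows from the key identity: if $x\in{\rm Ker}(P)$ then $0\le V(f_{\rm cl}(x))=-\|x\|_Q^2-\|\phi(x)\|_R^2\le 0$, so $f_{\rm cl}(x)\in{\rm Ker}(P)$. Decomposing any $x$ orthogonally into its projections onto ${\rm Ker}(P)$ and ${\rm range}(P)$ gives $\sigma_{\min}(P)\,|x|_{\mathcal{N}}^2\le V(x)\le \sigma_{\max}(P)\,|x|_{\mathcal{N}}^2$ (the case $P=0$ being trivial), which is the $\mathcal{K}$-function sandwich required in condition~1) of Theorem~\ref{thm_set_stability}. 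For the decrease condition~2), the key identity together with~(\ref{equ_set_stability}) gives $V(f_{\rm cl}(x))-V(x)\le -\|x\|_Q^2\le -\gamma^2|x|_{\mathcal{N}}^2$, a positive function of $|x|_{\mathcal{N}}$. Since~(\ref{equ_ARE})--(\ref{equ_set_stability}) hold on all of $\mathbb{R}^n$, Theorem~\ref{thm_set_stability} yields global asymptotic stability with respect to $\mathcal{N}$; in particular $|x_k|_{\mathcal{N}}\to 0$ and hence $V(x_k)\le \sigma_{\max}(P)\,|x_k|_{\mathcal{N}}^2\to 0$ along every closed-loop trajectory.

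For claims~2) and~3) I would run the standard telescoping estimates. Along the trajectory driven by $\phi$, summing the key identity over $k=0,\dots,N-1$ gives $\sum_{k=0}^{N-1}L(x_k,\phi(x_k))=V(x_0)-V(x_N)$; letting $N\to\infty$ and using $V(x_N)\to 0$ from the previous step gives $J(x_0,\phi)=V(x_0)$. For an arbitrary admissible input $\{u_k\}$, the Bellman inequality $V(x_k)\le L(x_k,u_k)+V(x_{k+1})$ telescopes to $\sum_{k=0}^{N-1}L(x_k,u_k)\ge V(x_0)-V(x_N)$ for all $N$; if $J(x_0,u)=\infty$ the inequality $J(x_0,u)\ge V(x_0)$ is immediate, while if $J(x_0,u)<\infty$ then $L(x_k,u_k)\to 0$, so~(\ref{equ_set_stability}) forces $|x_k|_{\mathcal{N}}\to 0$, hence $V(x_N)\to 0$ and $J(x_0,u)\ge V(x_0)=J(x_0,\phi)$. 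Therefore $\phi$ is optimal with optimal index $V(x_0)=x_0^{\rm T}Px_0$ and $\mathcal{N}={\rm Ker}(P)$, which proves 2) and 3).

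The step I expect to be the main obstacle is the bookkeeping forced by $P$ being merely positive \emph{semi}definite, so that the invariant ``equilibrium'' is the subspace ${\rm Ker}(P)$ rather than the origin: one must verify forward invariance of $\mathcal{N}$, relate the quadratic form $x^{\rm T}Px$ to the set-distance $|x|_{\mathcal{N}}$ through the nonzero spectrum of $P$, and --- most importantly --- use hypothesis~(\ref{equ_set_stability}) to recover a strict decrease of $V$ at points off $\mathcal{N}$. Without~(\ref{equ_set_stability}), $\|x\|_Q^2$ could vanish outside $\mathcal{N}$, which would break both the Lyapunov argument in 1) and the state-convergence step used in the optimality proof.
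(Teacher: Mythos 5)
Your proposal is correct and follows essentially the same route as the paper: the same completion-of-squares identity from the ARE, the same Lyapunov function $V(x)=x^{\rm T}Px$ with the sandwich $\sigma_{\min}(P)|x|_{\mathcal{N}}^2\leqslant V(x)\leqslant\sigma_{\max}(P)|x|_{\mathcal{N}}^2$ and decrement $-\|x\|_Q^2-\|\phi(x)\|_R^2$, and an appeal to Theorem~\ref{thm_set_stability}. If anything, your version is slightly more careful than the paper's at two points it glosses over --- you explicitly verify forward invariance of $\mathcal{N}={\rm Ker}(P)$ before invoking the set-stability theorem, and in the optimality step you use hypothesis~(\ref{equ_set_stability}) to show $V(x_N)\rightarrow 0$ for every finite-cost input rather than only along the closed loop under $\phi$, which is what makes the conclusion $J(x_0,u)\geqslant V(x_0)$ airtight.
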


\begin{proof}
Proof of Part 1): With $u_k = \phi(x_k)$, the closed-loop system for the system in (\ref{equ_linear_system})
is $x_{k+1} = [A - B(B^{\rm T} P B+R)^{-1} B^{\rm T} P A] x_k$. Constructing a Lyapunov function $V(x_k)$, one has
$\sigma_{\min}(P)(|x_k|_{\mathcal{N}})^2 \leqslant V(x_k) \leqslant \sigma_{\max}(P)(|x_k|_{\mathcal{N}})^2$ and
$V(x_{k+1}) - V(x_k) = -x^{\rm T}_k(Q + A^{\rm T} P B (B^{\rm T} P B+R)^{-1} $
$R (B^{\rm T} P B+R)^{-1} B^{\rm T} P A) x_k \leqslant
-(\gamma |x_k|_{\mathcal{N}})^{2}$.
Thus, the conditions in Theorem \ref{thm_set_stability} are satisfied,
and the closed-loop system is asymptotically stable with respect to the set $\mathcal{N}$.

Proof of Part 2): Define $J_N(x_0,u_k) = \sum_{k=0}^{N} L(x_k, u_k)$. Since
\begin{equation*}
\sum_{k=0}^N(V(x_{k+1}) - V(x_k)) = V(x_{N}) - V(x_0),
\end{equation*}
one has that
\begin{align*}
& J_N(x_0,u_k)\\
= & \sum_{k=0}^{N} (\|x_k\|^2_{Q} + \|u_k\|^2_{R} + V(x_{k+1}) - V(x_k)) - V(x_N) + V(x_0),\\
= & \sum_{k=0}^{N} (\|x_{k+1}\|^2_P + \|x_k\|^2_{(Q-P)} +\|u_k\|^2_{R}) - V(x_N) + V(x_0).
\end{align*}
Plugging the system dynamics (\ref{equ_linear_system}) into the above equation, we have
\begin{align*}
J_N(x_0,u_k) = & \sum_{k=0}^{N} (\|x_k\|^2_{(Q-P + A^{\rm T} P A)} +\|u_k\|^2_{(R + B^{\rm T} P B)}\\
               & + 2x_k^{\rm T} A^{\rm T} P B u_k) - V(x_N) + V(x_0).
\end{align*}
Using the condition (\ref{equ_ARE}), we get
\begin{align*}
& J_N(x_0,u_k) = \sum_{k=0}^{N} (\|x_k\|^2_{(A^{\rm T} P B(R + B^{\rm T} P B)^{-1}B^{\rm T} P A)} \\
               &+\|u_k\|^2_{(R + B^{\rm T} P B)} + 2x_k^{\rm T} A^{\rm T} P B u_k) - V(x_N) + V(x_0).
\end{align*}
According to \cite{Frank1986}, this summation can be further written in a square form as
\begin{align*}
& J_N(x_0,u_k) = \sum_{k=0}^{N} (\|u_k - \phi(x_k)\|^2_{(R + B^{\rm T} P B)}) - V(x_N) + V(x_0).
\end{align*}
As a result, we have
\begin{align}
J(x_0,u_k) = &\lim_{N\rightarrow \infty} J_N(x_0,u_k),\nonumber \\
            =&\sum_{k=0}^{\infty} (\|u_k - \phi(x_k)\|^2_{(R + B^{\rm T} P B)})\nonumber \\
            &  - \lim_{k\rightarrow \infty}V(x_k) + V(x_0).\label{equ_J_x_u}
\end{align}
From (\ref{equ_J_x_u}), we can see that the performance index $J(x_0,u_k)$ is minimized by $u_k = \phi(x_k)$.

Proof of Part 3): According to Part 1), the closed-loop system is set stable with respect to $\mathcal{N}$, one has
$\lim_{k\rightarrow \infty}V(x_k) = 0$. In terms of (\ref{equ_J_x_u}), the optimal value of $J(x_0,u_k)$ is $V(x_0)$.
The proof is completed.
\end{proof}
\section{Constrained Consensus for Subsystems with Semi-stable Dynamics}\label{Sec_semistable}
This section considers the case that $A$ is semi-stable in the subsystem dynamic (\ref{equ_agent}).
In this situation, we first propose a class of optimal consensus protocols for the MAS to reach consensus.
Then we investigate under what conditions the derived optimal performance index can be distributively assigned among each agent, based on which
we propose a distributed constrained RHC-based consensus strategy. Finally, we analyze the feasibility issue and consensus property
of the designed constrained consensus strategy.
\subsection{Properties of Semi-stable Systems}
The fact that $A$ is semi-stable means that ${\rm spec}(A)\leqslant 1$, and if
${\rm spec}(A)=1$, then $1$ is a simple eigenvalue.
To facilitate presenting the property of the semi-stable system, we recall the definition of semi-observability \cite{Hui09IJC_semistable}
\begin{definition}\label{def_semiobservability}
The pair $(C, A)$ is said to be semi-observable, if ${\rm Ker}(A-I_n) = \bigcap_{i=0}^{n-1} {\rm Ker}(C(A-I_n)^{i})$,
where $A\in\mathbb{R}^{n\times n}$, and $C \in\mathbb{R}^{p\times n}$, and $(A-I_n)^0 = I_n$.
\end{definition}
\begin{lemma}\label{lem_semistability}\cite{Hui09IJC_semistable}
If $A$ is semi-stable, then for every semi-observable pair $(C, A)$, there exists a symmetric matrix $S>0$, such that
\begin{equation}\label{equ_Lyapunov_equation}
A^{\rm T} S A - S = C^{\rm T} C.
\end{equation}
Furthermore, $S$ can be taken as in the form
\begin{equation}\label{equ_S_value}
S = \sum_{i=0}^{\infty} (A^{k})^{\rm T} C^{\rm T} C A^k + a L^{\rm T} L,
\end{equation}
where $a>0$ is a constant, and $ L = I_n - (A-I_n)(A-I_n)^{\#}$.
\end{lemma}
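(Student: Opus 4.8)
The plan is to exhibit $S$ as the matrix displayed in (\ref{equ_S_value}) and to check the three claims --- convergence of the series, the algebraic identity (\ref{equ_Lyapunov_equation}), and positive definiteness --- by exploiting the spectral structure of a semi-stable matrix together with the semi-observability hypothesis. First I would record the splitting that semi-stability provides: $\mathbb{R}^{n}={\rm Ker}(A-I_n)\oplus{\rm range}(A-I_n)$, with both subspaces $A$-invariant, $A$ acting as the identity on ${\rm Ker}(A-I_n)$, and $A$ restricting to a Schur-stable operator (spectral radius strictly below $1$) on ${\rm range}(A-I_n)$. Because the eigenvalue $1$ is semisimple, the group inverse $(A-I_n)^{\#}$ is well defined and is a polynomial in $A-I_n$, so $L=I_n-(A-I_n)(A-I_n)^{\#}$ is the (in general oblique) idempotent with ${\rm range}(L)={\rm Ker}(A-I_n)$ and ${\rm Ker}(L)={\rm range}(A-I_n)$; in particular $AL=LA=L$.

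Next I would treat convergence and the Lyapunov identity. For any $x$, write $x=Lx+(I_n-L)x$. The component $(I_n-L)x$ lies in ${\rm range}(A-I_n)$, so $\|A^{i}(I_n-L)x\|\leqslant c\rho^{i}\|x\|$ for some $\rho\in[0,1)$, while semi-observability with the index $i=0$ gives ${\rm Ker}(A-I_n)\subseteq{\rm Ker}(C)$, hence $CLx=0$ and therefore $CA^{i}x=CA^{i}(I_n-L)x$. Thus $\|CA^{i}\|$ decays geometrically, the series $S_{0}=\sum_{i=0}^{\infty}(A^{i})^{\rm T}C^{\rm T}CA^{i}$ converges to a matrix $S_{0}\geqslant 0$, and a telescoping of the series yields $S_{0}-A^{\rm T}S_{0}A=C^{\rm T}C$, i.e., the Lyapunov-type equation (\ref{equ_Lyapunov_equation}) with $S_{0}$ in place of $S$. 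Finally, the correction term $aL^{\rm T}L$ is invisible to this equation: since $AL=LA=L$, one has $A^{\rm T}(aL^{\rm T}L)A-aL^{\rm T}L=a\big((LA)^{\rm T}(LA)-L^{\rm T}L\big)=0$, so $S=S_{0}+aL^{\rm T}L$ satisfies the same identity for every $a>0$.

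It remains to prove $S>0$, which is the only delicate step. Suppose $x^{\rm T}Sx=0$; since $S_{0}\geqslant0$ and $aL^{\rm T}L\geqslant0$, this forces $x^{\rm T}S_{0}x=\sum_{i\geqslant0}\|CA^{i}x\|^{2}=0$ and $Lx=0$ simultaneously. The key observation is that, for any vector $x$, the Krylov subspaces ${\rm span}\{A^{i}x\}_{i\geqslant0}$ and ${\rm span}\{(A-I_n)^{i}x\}_{i\geqslant0}$ coincide --- each $A^{i}x$ is a polynomial in $A-I_n$ evaluated at $x$, and conversely --- so $CA^{i}x=0$ for all $i\geqslant0$ is equivalent to $C(A-I_n)^{i}x=0$ for all $i\geqslant0$, which by the Cayley--Hamilton theorem is equivalent to $x\in\bigcap_{i=0}^{n-1}{\rm Ker}\,C(A-I_n)^{i}$. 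Semi-observability identifies this intersection with ${\rm Ker}(A-I_n)$. Hence $x^{\rm T}S_{0}x=0$ forces $x\in{\rm Ker}(A-I_n)={\rm range}(L)$, whereas $Lx=0$ forces $x\in{\rm Ker}(L)={\rm range}(A-I_n)$; the direct-sum decomposition then forces $x=0$, so $S>0$.

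The main obstacle is exactly this last equivalence, which ties the degeneracy set $\bigcap_{i}{\rm Ker}\,CA^{i}$ of the series matrix $S_{0}$ to the set $\bigcap_{i}{\rm Ker}\,C(A-I_n)^{i}$ constrained by semi-observability; everything else is bookkeeping about the semi-stable splitting and the properties of the projection $L$. A secondary point to be careful about is that $L$ is only an oblique projection in general, so the positive semidefinite term appearing in $S$ is $L^{\rm T}L$ rather than $L$, and one must verify $AL=LA=L$ in order to see that the $aL^{\rm T}L$ correction does not disturb (\ref{equ_Lyapunov_equation}).
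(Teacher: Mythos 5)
Your argument is correct in substance, and it is worth noting that the paper offers no proof of Lemma \ref{lem_semistability} at all -- the result is simply quoted from \cite{Hui09IJC_semistable} -- so the natural benchmark is the standard semistable Lyapunov construction, which is exactly what you reproduce: the $A$-invariant splitting $\mathbb{R}^n={\rm Ker}(A-I_n)\oplus{\rm range}(A-I_n)$, the group-inverse projector $L$ with $AL=LA=L$, geometric decay of $CA^{i}$ via ${\rm Ker}(A-I_n)\subseteq{\rm Ker}(C)$, telescoping of the series, invariance of the Lyapunov identity under adding $aL^{\rm T}L$, and the Krylov/Cayley--Hamilton identification of $\bigcap_{i}{\rm Ker}(CA^{i})$ with $\bigcap_{i}{\rm Ker}(C(A-I_n)^{i})={\rm Ker}(A-I_n)$, which together with $Lx=0$ forces $x=0$ and gives $S>0$. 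Each of these steps checks out.

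The one place you should not wave through is the ``i.e.'' identifying your telescoped identity with (\ref{equ_Lyapunov_equation}). What the series actually satisfies -- and what you correctly derive -- is $S_0-A^{\rm T}S_0A=C^{\rm T}C$, that is $A^{\rm T}SA-S=-C^{\rm T}C$, whereas (\ref{equ_Lyapunov_equation}) as printed has the opposite sign. The printed sign is a typo: if $A^{\rm T}SA-S=C^{\rm T}C\geqslant 0$ with $S>0$, then $\|A^{k}v\|_{S}$ is nondecreasing in $k$, so every $v\in{\rm range}(A-I_n)$ (where $A^{k}v\to 0$) must vanish, forcing $A=I_n$ and then $C=0$; moreover the way the equation is used in the proof of Theorem \ref{thm_consensus_inverse_semi} (to convert $S_1\otimes(A^{\rm T}S_2A-S_2)$ into $-S_1\otimes Q_2$) requires precisely $A^{\rm T}S_2A-S_2=-Q_2$. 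So your proof establishes the intended statement; just say explicitly that you are proving $A^{\rm T}SA-S+C^{\rm T}C=0$ rather than matching the displayed equation verbatim. A minor secondary caveat: the geometric decay of $A^{i}(I_n-L)$ uses that the only unimodular eigenvalue of a semistable $A$ is $1$ and that it is semisimple, which is the definition in \cite{Hui09IJC_semistable}; the paper's own wording of semistability does not literally exclude other unit-circle eigenvalues, so it is worth stating the spectral assumption you actually use.
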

\subsection{Optimal Consensus Protocol}
For the MAS in (\ref{equ_agent}), it is known that a class of consensus protocols can be taken as \cite{Ma10TAC_IFF_consensus}\cite{You_11_TAC_consensus}
\begin{equation}\label{eqn_conventional_protocol}
u_k^i = c\sum_{j\in\mathcal{N}_i}K_2(x_k^i-x_k^j),
\end{equation}
where $c>0$ is the coupling gain and $K_2\in\mathbb{R}^{m\times n}$, is the gain matrix.
As a result, the overall control input $U_k$ becomes $U_k = c (\mathcal{L} \otimes K_2) X_k$,
where $\mathcal{L}$ is the Laplacian matrix of the graph $\mathcal{G}$.
There are many ways to design $K_2$ to drive the MAS to reach consensus, for example, \cite{You_11_TAC_consensus}\cite{Hengster13Auto_synchronization}.
In the following, we propose a way to design $K_2$ such that the consensus can be reached,
and the resultant overall control $U_k$ is optimal with respect to a global performance index for the overall system in (\ref{equ_overall_system}).
This consensus protocol will facilitate the design of the constrained RHC consensus strategy.
\begin{theorem}\label{thm_consensus_inverse_semi}
For the system in (\ref{equ_agent}), assume that $B$ is of full column rank. If the consensus gain is designed as $K_2 = -(B^{\rm T} S_2 B+R_2)^{-1} B^{\rm T} S_2 A$, then the consensus can be reached, and the control input $U_k = c (\mathcal{L} \otimes K_2) X_k$ is optimal with respect to the performance index $J_s(X_0,U_k) = \sum_{k=0}^{\infty} \|X_k\|_{Q_s}^2 + \|U_k\|_{R_s}^2$ for the overall system in (\ref{equ_overall_system}), with
$Q_s = S_1\otimes Q_2 + \frac{c S_1 \mathcal{L}}{1+\alpha}\otimes H$, $H = A^{\rm T} S_2 B(B^{\rm T} S_2 B)^{-1}B^{\rm T} S_2 A$,
and $R_s =  R_1\otimes R_2$, where the parameters are designed as follows:
1) $Q_2 = C_2^{\rm T} C_2$ with $(C_2, A)$ being a semi-observable pair and ${\rm rank}(C_2) = n-1$; 2) $S_2$ is a symmetric and positive definite solution to (\ref{equ_Lyapunov_equation}); 3) $S_1 = W \mathcal{L}$, with $W$ being symmetric and invertible, and $W \mathcal{L}$ and $W \mathcal{L}^{2}$ being symmetric; 4) $R_1 = \frac{W(I_M -c \mathcal{L})}{c \alpha}$; 5) $R_2 = \alpha B^{\rm T} S_2 B$, where $\alpha>0$ is a constant; 6) $c$ is designed such that $c\leqslant \frac{1}{\sigma_{\max}(\mathcal{L})}$.
\end{theorem}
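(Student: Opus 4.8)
The plan is to read off this theorem from Lemma \ref{lemma_inverse_optimality}, applied to the augmented system (\ref{equ_overall_system}), i.e. with $(A,B)$ there replaced by $(I_M\otimes A,\ I_M\otimes B)$, cost weights $Q_s,R_s$, and the candidate Riccati matrix $P_s\triangleq S_1\otimes S_2$ (the scalar contributed by $R_2=\alpha B^{\rm T}S_2B$ must be carried through and reconciled with the $1/(1+\alpha)$ factors in the statement). Three things then have to be verified: (i) $P_s\geqslant 0$, $Q_s\geqslant 0$, $R_s>0$, and ${\rm Ker}(P_s)=\mathcal{C}$ with $\mathcal{C}=\{x^1=\cdots=x^M\}$; (ii) the Riccati identity (\ref{equ_ARE}) holds for the data $(I_M\otimes A,\ I_M\otimes B,\ P_s,\ Q_s,\ R_s)$ and the induced feedback $\phi(X_k)=-((I_M\otimes B)^{\rm T}P_s(I_M\otimes B)+R_s)^{-1}(I_M\otimes B)^{\rm T}P_s(I_M\otimes A)X_k$ coincides with the distributed protocol $c(\mathcal{L}\otimes K_2)X_k$; (iii) the detectability-type bound (\ref{equ_set_stability}), $\|X_k\|_{Q_s}\geqslant\gamma\,|X_k|_{\mathcal{C}}$, holds for some $\gamma>0$. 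Given (i)--(iii), Lemma \ref{lemma_inverse_optimality} certifies that $c(\mathcal{L}\otimes K_2)X_k$ is optimal for $J_s$ with optimal value $X_0^{\rm T}P_sX_0$ and that the closed loop is asymptotically stable with respect to $\mathcal{C}$; Theorem \ref{thm_AS_Consensus} then yields consensus.

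For (i): since $S_2>0$ one has ${\rm Ker}(S_1\otimes S_2)={\rm Ker}(S_1)\otimes\mathbb{R}^n$, and since $W$ is invertible ${\rm Ker}(S_1)={\rm Ker}(W\mathcal{L})={\rm Ker}(\mathcal{L})={\rm span}\{\mathbf{1}_M\}$ under Assumption \ref{ass_spanning_tree} (a spanning tree forces the zero eigenvalue of $\mathcal{L}$ to be simple); hence ${\rm Ker}(P_s)={\rm span}\{\mathbf{1}_M\}\otimes\mathbb{R}^n=\mathcal{C}$, exactly the set for which set stability with respect to $\mathcal{N}$ means consensus. The definiteness requirements reduce to the positivity conditions $W\mathcal{L}\geqslant 0$, $W\mathcal{L}^2\geqslant 0$, and $W(I_M-c\mathcal{L})>0$ (here $R_2=\alpha B^{\rm T}S_2B>0$ because $B$ is full column rank and $S_2>0$), which follow from the assumed symmetry of $W\mathcal{L}$ and $W\mathcal{L}^2$ together with the coupling bound $c\leqslant 1/\sigma_{\max}(\mathcal{L})$ in item 6. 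Symmetry of $W\mathcal{L}$ also gives $\mathcal{L}W=(W\mathcal{L})^{\rm T}=W\mathcal{L}$, whence $W^{-1}S_1=\mathcal{L}$ and $S_1\mathcal{L}=W\mathcal{L}^2$, identities used repeatedly below.

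Step (ii) is the heart of the argument and amounts to Kronecker-product bookkeeping. One computes $(I_M\otimes B)^{\rm T}P_s(I_M\otimes B)+R_s=(S_1+\alpha R_1)\otimes B^{\rm T}S_2B$, and the chosen form of $R_1$ collapses $S_1+\alpha R_1$ to $W/c$, so the matrix to be inverted is $(W/c)\otimes B^{\rm T}S_2B$, with a clean Kronecker inverse; substituting into $\phi$ and using $W^{-1}S_1=\mathcal{L}$ recovers $\phi(X_k)=c(\mathcal{L}\otimes K_2)X_k$ with $K_2$ as stated, i.e. the protocol (\ref{eqn_conventional_protocol}). For the left side of (\ref{equ_ARE}), $(I_M\otimes A)^{\rm T}P_s(I_M\otimes A)-P_s=S_1\otimes(A^{\rm T}S_2A-S_2)$, which Lemma \ref{lem_semistability} applied to the semi-observable pair $(C_2,A)$ rewrites through $Q_2=C_2^{\rm T}C_2$ (the hypothesis ${\rm rank}(C_2)=n-1$ being what pins down ${\rm Ker}(Q_2)$ as one-dimensional), while the Riccati quadratic term expands, via the same inverse, to a multiple of $W\mathcal{L}^2\otimes H$ with $H=A^{\rm T}S_2B(B^{\rm T}S_2B)^{-1}B^{\rm T}S_2A$; collecting terms matches (\ref{equ_ARE}) to the stated $Q_s=S_1\otimes Q_2+\frac{cS_1\mathcal{L}}{1+\alpha}\otimes H$. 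Keeping the $\alpha$-dependent scalar factors straight here is the step most prone to error, and I regard it as the main obstacle.

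Finally, for (iii): $Q_s$ is symmetric (using symmetry of $W\mathcal{L}$, $W\mathcal{L}^2$, $Q_2$, and $H=(B^{\rm T}S_2A)^{\rm T}(B^{\rm T}S_2B)^{-1}(B^{\rm T}S_2A)$) and positive semidefinite by (i); the decisive point is that ${\rm Ker}(Q_s)$ is no bigger than $\mathcal{C}$. The first summand $S_1\otimes Q_2$ has kernel ${\rm span}\{\mathbf{1}_M\}\otimes\mathbb{R}^n+\mathbb{R}^M\otimes{\rm Ker}(Q_2)$, strictly larger than $\mathcal{C}$ because ${\rm Ker}(Q_2)$ is one-dimensional, but intersecting with the kernel of the second summand (whose $W\mathcal{L}^2$ factor again has kernel ${\rm span}\{\mathbf{1}_M\}$, as the zero eigenvalue of $\mathcal{L}$ is simple) removes the surplus directions, leaving ${\rm Ker}(Q_s)=\mathcal{C}$. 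Hence $X^{\rm T}Q_sX\geqslant\sigma_{\min}(Q_s)\,|X|_{\mathcal{C}}^2$, so (\ref{equ_set_stability}) holds with $\gamma=\sqrt{\sigma_{\min}(Q_s)}$. Combining (i)--(iii) with Lemma \ref{lemma_inverse_optimality} and Theorem \ref{thm_AS_Consensus} completes the proof.
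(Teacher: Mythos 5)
Your overall architecture is exactly the paper's: take $P_s=S_1\otimes S_2$, verify the ARE of Lemma \ref{lemma_inverse_optimality} for the augmented system via Kronecker bookkeeping, identify the induced feedback with $c(\mathcal{L}\otimes K_2)$, and then invoke the set-stability bound (\ref{equ_set_stability}) with $\mathcal{N}={\rm Ker}(P_s)={\rm Ker}(\mathcal{L}\otimes I_n)$. Steps (i) and (ii) are in line with the paper (modulo the $\alpha$-dependent scalar you yourself flag as unresolved; note your computation $S_1+\alpha R_1=W/c$ does not reproduce the paper's claimed identity $(S_1+\alpha R_1)^{-1}S_1=\tfrac{c\mathcal{L}}{1+\alpha}$, so that reconciliation is still owed, though the discrepancy traces to the stated form of $R_1$ rather than to your plan).

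The genuine gap is in step (iii). You assert that intersecting ${\rm Ker}(S_1\otimes Q_2)$ with the kernel of the second summand ``removes the surplus directions'' because the $W\mathcal{L}^2$ factor has kernel ${\rm span}\{\mathbf{1}_M\}$. But the second summand is $\tfrac{cS_1\mathcal{L}}{1+\alpha}\otimes H$ with $H=A^{\rm T}S_2B(B^{\rm T}S_2B)^{-1}B^{\rm T}S_2A$, and $H$ has rank at most $m={\rm rank}(B)$, hence is singular whenever $m<n$; its kernel therefore contributes directions $\mathbb{R}^M\otimes{\rm Ker}(H)$ to the kernel of that summand. A surplus direction $e_i\otimes w_1$ with $0\neq w_1\in{\rm Ker}(Q_2)$ is eliminated only if $Hw_1\neq 0$, and that is precisely the nontrivial fact the paper proves by contradiction: $Q_2w_1=0$ with $(C_2,A)$ semi-observable forces $Aw_1=w_1$; $Hw_1=0$ then forces $B^{\rm T}S_2w_1=0$; substituting the explicit form (\ref{equ_S_value}) of $S_2$ and using $Lw_1=w_1$, $L=LA$, and controllability of $(A,B)$ yields $L^{\rm T}w_1=0$; finally ${\rm range}(L)={\rm Ker}(A-I_n)$ gives $w_1=0$. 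Your proposal never uses the structure of $S_2$, Assumption \ref{ass_controlability}, or the group-inverse projector $L$ — the ingredients without which ${\rm Ker}(Q_s)=\mathcal{C}$ simply does not follow — so the detectability-type bound $\|X\|_{Q_s}\geqslant\gamma|X|_{\mathcal{N}}$, and with it the consensus conclusion, is unsupported as written.
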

\begin{proof}
Define $S_s = S_1 \otimes S_2$ and consider a term for the overall system in (\ref{equ_overall_system})
\begin{align*}
\Sigma \triangleq & (I_M \otimes A)^{\rm T} S_s (I_M \otimes A) - S_s  -[(I_M \otimes A)^{\rm T} S_s (I_M \otimes B)]\\
& \times [R +(I_M \otimes B)^{\rm T} S (I_M \otimes B)]^{-1}[(I_M \otimes B)^{\rm T} S_s (I_M \otimes A)].
\end{align*}
Using the property of the Kronecker product, one has
\begin{align*}
\Sigma  =&  S_1 \otimes A^{\rm T} S_2 A - S_1\otimes S_2 - (S_1 \otimes A^{\rm T} S_2 B)\\
& \times[S_1 \otimes B^{\rm T} S_2 B + R_1 \otimes R_2]^{-1} (S_1 \otimes B^{\rm T} S_2 A).
\end{align*}
Applying the design condition in (\ref{equ_Lyapunov_equation}) for $S_2$ and using $R_2 = \alpha B^{\rm T} S_2 B$,
we have
\begin{align*}
\Sigma =& -S_1 \otimes Q_2 - S_1(S_1 + \alpha R_1)^{-1} S_1 \otimes H.
\end{align*}
Note that the design conditions 3), 4) and 6) ensure $(S_1 + \alpha R_1)$ is invertible.
Using the conditions 3) and 4), we have
\begin{equation}\label{equ_inermidiate_S1}
 (S_1 + \alpha R_1)^{-1} S_1= \frac{c\mathcal{L}}{1+\alpha}.
\end{equation}
As a result, $\Sigma = - Q_s$. That is, the ARE in (\ref{equ_ARE}) is satisfied.
Furthermore, the consensus protocol $U_k = K^* X_k \triangleq c \mathcal{L}\otimes K_2 X_k$ can be further written as
\begin{align*}
K^* = -\frac{c\mathcal{L}}{1+\alpha} \otimes (B^{\rm T} S_2 B)^{-1} B^{\rm T} S_2 A,
\end{align*}
where the condition in 5) is utilized. Using (\ref{equ_inermidiate_S1}), one obtains that
$K^* =(S_1 + \alpha R_1)^{-1} S_1 \otimes (B^{\rm T} S_2 B)^{-1} B^{\rm T} S_2 A$.
Applying the property of the Kronecker product, one gets
$K^* =(S_1 \otimes B^{\rm T} S_2 B +  R_1\otimes R_2)^{-1} (S_1 \otimes (B^{\rm T} S_2 B)^{-1} B^{\rm T} S_2 A)$,
which is further equivalent to
\begin{align*}
K^* = & -[(I_M \otimes B)^{\rm T} S_s (I_M \otimes B) + R ]^{-1}\\
      & \times (I_M \otimes B)^{\rm T} S_s(I_M \otimes A).
\end{align*}
According to Lemma \ref{lemma_inverse_optimality}, the control protocol $U_k = K^* X_k$ is indeed optimal with respect to
the performance index $J_s(X_0, U_k)$.

Next, we need to prove that $\|X_k\|_{Q} \geqslant \gamma_1 \|X_k\|_{\mathcal{N}}$, where $\gamma_1>0$ is some constant, and $\mathcal{N} ={\rm Ker}(S_s)$. Since $S_1 = W \mathcal{L}$ and $W$ is invertible, and $S_2$ is invertible, one obtains that $\mathcal{N} = {\rm Ker}(\mathcal{L}\otimes I_n)$. Because $Q_2\geqslant 0$, the null space of $S_1 \otimes Q_2$ can be represented by the union of two sets, $\mathcal{N}$ and $\mathcal{N}_1 = {\rm Ker}(I_M \otimes Q_2)$, i.e., ${\rm Ker}(S_1 \otimes Q_2) = \mathcal{N} \cup \mathcal{N}_1$. Define $\bar{\mathcal{N}}_1 = \mathcal{N} \cap \mathcal{N}_1$. Note that $\bar{\mathcal{N}}_1$ is not an empty set. Similarly, the null space of the matrix $\frac{c S_1 \mathcal{L}}{1+\alpha}\otimes H$ can also be made up from two parts, i.e., ${\rm Ker}(\frac{c S_1 \mathcal{L}}{1+\alpha}\otimes H) = \mathcal{N} \cup \mathcal{N}_2$, where $\mathcal{N}_2 = {\rm Ker }(I_M \otimes H)$. Define $\bar{\mathcal{N}}_2 = \mathcal{N} \cap \mathcal{N}_2$.

Firstly, we prove the fact that $(\mathcal{N}_1 \setminus \bar{\mathcal{N}}_1) \cap (\mathcal{N}_2 \setminus \bar{\mathcal{N}}_2) = {0}$. This is proved by contradiction.
Assume that there is an element $v_1 \neq 0$ in $\mathcal{N}_1\setminus \bar{\mathcal{N}}_1$ and it also belongs to $\mathcal{N}_2 \setminus \bar{\mathcal{N}}_2$. Denote the corresponding eigenvector for the eigenvalue $0$ of $Q_2$ by $w_1$. Then $\mathcal{N}_1\setminus \bar{\mathcal{N}}_1$ can be represented by ${\rm span}(e_i\otimes w_1)$, where $i=1,\cdots,M-1$.
Without loss of generality, take $v_1 = e_1 \otimes w_1$. Since $Q_2 w_1 = 0$, it follows that $C_2 w_1 = 0$. By the condition that $(C_2, A)$ is semi-observable, one gets $A w_1 = w_1$. On the other hand, $v_1 \in \mathcal{N}_2 \setminus \bar{\mathcal{N}}_2$ implies $\frac{c S_1 \mathcal{L}}{1+\alpha}e_1 \otimes H w_1 = 0$. It is noted that $\mathcal{L} e_1 \neq 0$. As a result, it is required that $Hw_1 = 0$.
That is equivalent to $w_1^{\rm T}A^{\rm T} S_2 B(B^{\rm T} S_2 B)^{-1}B^{\rm T} S_2 A w_1 = 0$. Using the fact that $A w_1 = w_1$,
it is further required that $w_1^{\rm T} S_2 B(B^{\rm T} S_2 B)^{-1}B^{\rm T} S_2 w_1 = 0$. Since $S_2 >0$ and $B^{\rm T} S_2 B>0$, the requirement
is equivalent to $B^{\rm T} S_2 w_1 = 0$.
According to the design of $S_2$ in (\ref{equ_S_value}), one has $B^{\rm T}(\sum_{k=0}^{\infty} (A^{k})^{\rm T} Q_2 A^k + a L^{\rm T} L) w_1 = 0$.
Using the fact that $A w_1 = w_1$ and $Q_2 w_1 = 0$, it follows that $B^{\rm T}L^{\rm T} L w_1 = 0$.
Note that $w_1 \in {\rm Ker }(A - I_n)$, as a result, $L w_1 = w_1$, leading to
\begin{equation}\label{equ_w1_result1}
B^{\rm T}L^{\rm T} w_1 = 0.
\end{equation}
Since $L(A-I_n) = (A - I_n) - (A-I_n)(A-I_n)^{\rm \#}(A-I_n) = 0$, we obtain $ L = L A$. Plugging this into (\ref{equ_w1_result1}),
one has $B^{\rm T} A ^{\rm T}L^{\rm T} w_1 = 0$. Similarly, we can obtain $B^{\rm T} (A^2)^{\rm T}L^{\rm T} w_1 = 0, \cdots, B^{\rm T} (A^{(n-1)})^{\rm T}L^{\rm T} w_1 = 0$. As a result, we get
$\left[\begin{array}{c}
   B^{\rm T}\\
   B^{\rm T} A ^{\rm T} \\
   \vdots \\
   B^{\rm T} (A^{(n-1)})^{\rm T}
 \end{array}\right] L^{\rm T} w_1  = 0$. Because $(A,B)$ is controllable, ${\rm rank}(B, AB, \cdots, A^{(n-1)}B) = n$.
Therefore, we require $L^{\rm T} w_1 = 0$, implying $w_1 \in {\rm Ker}(L^{\rm T})$. On the other hand, according to Theorem 2.1 in \cite{Hui09IJC_semistable},
${\rm range}(L) = {\rm Ker}(A-I_n)$, indicating that $w_1 \in {\rm range}(L)$. As a result, it follows that $w_1 = 0$.
This contradicts with $v_1 = e_1 \otimes w_1 \neq 0$.

Since we have proved that $(\mathcal{N}_1 \setminus \bar{\mathcal{N}}_1) \cap (\mathcal{N}_2 \setminus \bar{\mathcal{N}}_2) = {0}$, implying that
the null space of $Q_s$ is $\mathcal{N} = {\rm Ker}(\mathcal{L}\otimes I_n)$. As a result, we have $\|X\|_{Q_s} \geqslant \sqrt{\sigma_{\min}(Q_s)} |X|_{\mathcal{N}}$. Hence, the condition in (\ref{equ_set_stability}) holds.
Applying Lemma \ref{lemma_inverse_optimality}, it follows that the closed-loop system is asymptotically stable with respect to the set $\mathcal{N}$,
implying that the consensus is reached by the consensus protocol (\ref{eqn_conventional_protocol}) with $K_2$ being designed as in the theorem.
The proof is completed.
\end{proof}
\subsection{RHC-based Consensus Strategy}
In this subsection, the design of the terminal constraint is firstly presented, then the RHC-based consensus strategy is designed.
After that, the condition that can make the optimal cost function equivalently be assigned to each agent is developed, based on which the distributed RHC-based consensus strategy is finally stated.
\subsubsection{Terminal Constraint}
For the overall system in (\ref{equ_overall_system}) with the optimal state feedback $U_k = K^* X_k$, the closed-loop system becomes
\begin{equation}\label{equ_closed_loop_system1}
X_{k+1} = [I_M \otimes A + (I_M \otimes B) K^*] X_k.
\end{equation}
For the system in (\ref{equ_closed_loop_system1}), given a parameter $\beta>0$, define the level set with respect to the set $\mathcal{N}$ as
$\mathcal{O}_{\beta} = \{X_k\in\mathbb{R}^{Mn}: \|X_k\|_{S} \leqslant \beta\}$. Note that $\mathcal{O}_{\beta}$ is closed but not necessarily compact.
\begin{lemma}\label{lemma_level_set}
For any given $\beta>0$, the level set $\mathcal{O}_{\beta}$ with respect to $\mathcal{N}$ is forward invariant for the system in (\ref{equ_closed_loop_system1}).
\end{lemma}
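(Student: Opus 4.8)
The plan is to certify forward invariance directly with the quadratic function $V(X) = \|X\|_{S_s}^2 = X^{\rm T} S_s X$, where $S_s = S_1 \otimes S_2$ is exactly the matrix $S$ appearing in the definition of $\mathcal{O}_\beta$, and to show that $V$ is non-increasing along the trajectories of (\ref{equ_closed_loop_system1}). Since $X \in \mathcal{O}_\beta$ is equivalent to $V(X) \leqslant \beta^2$, monotonicity of $V$ along the closed loop immediately yields invariance.

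First I would invoke what was already established in the proof of Theorem \ref{thm_consensus_inverse_semi}: under the parameter choices 1)--6), the algebraic Riccati equation (\ref{equ_ARE}) holds for the overall system in (\ref{equ_overall_system}) with $P = S_s \geqslant 0$, $Q = Q_s$, $R = R_s$, the closed-loop gain is $K^* = -[(I_M\otimes B)^{\rm T}S_s(I_M\otimes B)+R_s]^{-1}(I_M\otimes B)^{\rm T}S_s(I_M\otimes A)$, and the set-stability inequality (\ref{equ_set_stability}) holds on all of $\mathbb{R}^{Mn}$ with $\mathcal{N} = {\rm Ker}(\mathcal{L}\otimes I_n)$. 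Because (\ref{equ_closed_loop_system1}) is precisely the closed loop $X_{k+1} = [(I_M\otimes A) + (I_M\otimes B)K^*]X_k$ considered in Part 1) of Lemma \ref{lemma_inverse_optimality}, the same computation applies verbatim and gives, along any solution,
\[
V(X_{k+1}) - V(X_k) = -X_k^{\rm T}\!\left(Q_s + (K^*)^{\rm T}\!\left((I_M\otimes B)^{\rm T}S_s(I_M\otimes B)+R_s\right)\!K^*\right)\!X_k \leqslant -(\gamma_1 |X_k|_{\mathcal{N}})^2 \leqslant 0 .
\]
Hence $V$ is monotonically non-increasing along (\ref{equ_closed_loop_system1}).

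It then follows by induction on $k$ that, for any $X_0 \in \mathcal{O}_\beta$, $V(X_k) \leqslant V(X_{k-1}) \leqslant \cdots \leqslant V(X_0) \leqslant \beta^2$ for all $k \geqslant 0$, i.e. $\|X_k\|_{S_s} \leqslant \beta$, so $X_k \in \mathcal{O}_\beta$; this is exactly forward invariance. I do not expect a real obstacle in this argument; the only points requiring a word of care are (i) that $S_s = S_1\otimes S_2 \geqslant 0$, so that $\|\cdot\|_{S_s}$ and the sublevel set $\mathcal{O}_\beta$ are well defined (inherited from the hypotheses of Lemma \ref{lemma_inverse_optimality} already verified in Theorem \ref{thm_consensus_inverse_semi}), and (ii) that the decrease inequality of Lemma \ref{lemma_inverse_optimality} was shown to hold globally on $\mathbb{R}^{Mn}$ rather than only on a restricted region, so no auxiliary feasibility or region argument is needed. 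Note also that $\mathcal{O}_\beta$ need not be compact, but compactness plays no role: the proof uses only the sublevel-set structure and monotonicity of $V$.
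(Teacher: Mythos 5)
Your argument is essentially the paper's own proof: use the ARE to show that $\|X_k\|_{S_s}^2$ is non-increasing along the closed loop (\ref{equ_closed_loop_system1}) and conclude sublevel-set invariance by induction. One small slip: the one-step decrement equals $-X_k^{\rm T}\bigl(Q_s+(K^*)^{\rm T}R_sK^*\bigr)X_k$, not $-X_k^{\rm T}\bigl(Q_s+(K^*)^{\rm T}\bigl((I_M\otimes B)^{\rm T}S_s(I_M\otimes B)+R_s\bigr)K^*\bigr)X_k$ as you wrote, but since the correct expression is still nonpositive the conclusion is unaffected.
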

\begin{proof}
Using the property of the ARE, one gets that $\|X_{k+1}\|_S^2 - \|X_k\|_S^2 = -\|X_k\|^2_{((K^*)^{\rm T} R K^* + Q)}\leqslant -\sigma_{\min}(Q)(|X_k|_{\mathcal{N}})^2\leqslant 0$. As a result, $\forall X_k \in \mathcal{O}_\beta$, it implies $X_{k+1} \in \mathcal{O}_\beta$.
The proof is completed.
\end{proof}

\begin{lemma}\label{lemma_terminal_set}
For the system in (\ref{equ_closed_loop_system1}) with constraint $U_k = K^* X_k \in\mathcal{U}$, there exists a $\beta_s>0$ such that
$X_0 \in \mathcal{O}_{\beta_s}$ implies $X_k \in \mathcal{O}_{\beta_s}$ and $U_k \in \mathcal{U}$, for all $k\geqslant 0$.
\end{lemma}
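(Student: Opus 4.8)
The plan is to separate the claim into two independent pieces: forward invariance of the level set (which is already in hand) and a size bound on the feedback (which only has to be checked on a small enough level set). First I would note that by Lemma~\ref{lemma_level_set} \emph{every} level set $\mathcal{O}_{\beta}$ is forward invariant for the closed loop (\ref{equ_closed_loop_system1}), regardless of the value $\beta>0$. Hence, for whatever $\beta_s>0$ we eventually settle on, $X_0\in\mathcal{O}_{\beta_s}$ already forces $X_k\in\mathcal{O}_{\beta_s}$ for all $k\geqslant0$. So the entire content of the lemma is: choose $\beta_s$ small enough that $X\in\mathcal{O}_{\beta_s}\Rightarrow K^*X\in\mathcal{U}$; constraint satisfaction along the whole trajectory is then automatic.

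The key observation for making that choice is that $K^*$ annihilates the consensus subspace $\mathcal{N}$. Indeed, from Theorem~\ref{thm_consensus_inverse_semi} we have $K^*=-\tfrac{c}{1+\alpha}\,\mathcal{L}\otimes\big[(B^{\rm T}S_2B)^{-1}B^{\rm T}S_2A\big]$, so $\mathcal{N}={\rm Ker}(\mathcal{L}\otimes I_n)\subseteq{\rm Ker}(K^*)$. Consequently, for any $X$ and any $Y\in\mathcal{N}$, $\|K^*X\|=\|K^*(X-Y)\|\leqslant\|K^*\|\,\|X-Y\|$, and taking the infimum over $Y\in\mathcal{N}$ gives $\|K^*X\|\leqslant\|K^*\|\,|X|_{\mathcal{N}}$. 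Next I would bound $|X|_{\mathcal{N}}$ by $\|X\|_{S}$: writing $X=X^{\parallel}+X^{\perp}$ with $X^{\parallel}$ the orthogonal projection onto $\mathcal{N}={\rm Ker}(S)$, the cross terms vanish because $SX^{\parallel}=0$, so $\|X\|_{S}^{2}=(X^{\perp})^{\rm T}SX^{\perp}\geqslant\sigma_{\min}(S)\,\|X^{\perp}\|^{2}=\sigma_{\min}(S)\,|X|_{\mathcal{N}}^{2}$, where $\sigma_{\min}(S)>0$ since $S=S_1\otimes S_2$ with $S_2>0$ and $S_1$ symmetric positive semidefinite (the latter being forced by $S\geqslant0$, which is needed for $\|\cdot\|_S$ to be well defined). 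Chaining the two inequalities, $X\in\mathcal{O}_{\beta_s}$ gives $\|K^*X\|\leqslant\|K^*\|\,\beta_s/\sqrt{\sigma_{\min}(S)}$.

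To close the argument I would use that each $\mathcal{U}_i$ is compact with the origin as an interior point, so $\mathcal{U}=\mathcal{U}_1\times\cdots\times\mathcal{U}_M$ contains a ball $\{U:\|U\|\leqslant r\}$ for some $r>0$. Taking $\beta_s=r\sqrt{\sigma_{\min}(S)}/\|K^*\|$ (any smaller positive value works too), the estimate above yields $\|K^*X\|\leqslant r$, hence $K^*X\in\mathcal{U}$, for every $X\in\mathcal{O}_{\beta_s}$. Combining with the forward invariance from Lemma~\ref{lemma_level_set}: $X_0\in\mathcal{O}_{\beta_s}\Rightarrow X_k\in\mathcal{O}_{\beta_s}$ and $U_k=K^*X_k\in\mathcal{U}$ for all $k\geqslant0$, which is the assertion.

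I expect the one genuine subtlety to be the non-compactness of $\mathcal{O}_{\beta_s}$: because it contains the whole subspace $\mathcal{N}$, one cannot just invoke continuity of $X\mapsto K^*X$ on a compact set to extract a uniform bound. The resolution is precisely the structural fact that $K^*$ vanishes on $\mathcal{N}$, which converts the quantity to be controlled into a function of the distance $|X|_{\mathcal{N}}$, and that distance is in turn dominated by the (degenerate) seminorm $\|\cdot\|_{S}$ through $\sigma_{\min}(S)$. Everything else is routine linear-algebra bookkeeping.
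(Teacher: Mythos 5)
Your proposal is correct and follows essentially the same route as the paper: forward invariance of the level set from Lemma~\ref{lemma_level_set}, combined with the facts that $\mathcal{U}$ contains the origin as an interior point and $\mathcal{N}\subseteq{\rm Ker}(K^*)$ to shrink the level set until $K^*X\in\mathcal{U}$. The only difference is that the paper asserts the existence of such an $\epsilon_1$ in one line, whereas you make it explicit via the chain $\|K^*X\|\leqslant\|K^*\|\,|X|_{\mathcal{N}}\leqslant\|K^*\|\,\|X\|_{S}/\sqrt{\sigma_{\min}(S)}$ and the choice $\beta_s = r\sqrt{\sigma_{\min}(S)}/\|K^*\|$, which correctly handles the non-compactness of $\mathcal{O}_{\beta_s}$ that the paper glosses over.
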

\begin{proof}
Since $\mathcal{U}$ is compact and contains the origin as its interior point, and $\mathcal{N}\subseteq {\rm Ker}(K^*)$, it follows that there
exists an $\epsilon_1 >0$ such that $X \in \mathcal{O}_{\epsilon_1}$ implies $ K^* X \in \mathcal{U}$. According to Lemma \ref{lemma_level_set},
$\mathcal{O}_{\epsilon_1}$ is forward invariant for the system in (\ref{equ_closed_loop_system1}).
As a result, $X_0 \in \mathcal{O}_{\epsilon_1}$ implies $X_k \in \mathcal{O}_{\epsilon_1}$, for all $k\geqslant 0$, and further indicates $U_k \in\mathcal{U}$. Thus, $\beta_s$ can be taken as $\epsilon_1$. This completes the proof.
\end{proof}

In what follows, the set $\mathcal{O}_{\beta_s}$ will be chosen as the terminal set to impose terminal constraint as conventional RHC strategy. Note that the set $\mathcal{O}_{\beta_s}$ should be designed as large as possible to reduce conservatism in RHC algorithm.
Theoretically, $\beta_s$ can be calculated by $\beta_s = \max_{X} \{\epsilon| K^*X \in U, X \in \mathcal{O}_{\epsilon}\}$.

\subsubsection{RHC-based Consensus Strategy}
For the system in (\ref{equ_overall_system}), define an optimization problem as
\begin{problem}\label{problem_CRHC_Semi}
$\min \bar{U}_k^* = {\rm arg} J_s(X_k,\bar{U}_k), {\text subject \; to}$
\begin{align*}
X_{k+i+1|k} = &(I_M \otimes A) X_{k+i|k} + (I_M \otimes B) U_{k+i|k},\\
& U_{k+i|k}\in\mathcal{U}, X_{k+N|k}\in\mathcal{O}_{\beta_s},
\end{align*}
where $i=0,\cdots, N-1$, $X_{k|k} = X_k$, and $\bar{U}_k = {\rm col}(U_{k|k},\cdots, U_{k+N-1|k})$.
\end{problem}
The cost function is defined as
\begin{align*}
J_s(X_k,\bar{U}_k) = \sum_{i=0}^{N-1}\|X_{k+i|k}\|_{Q_s}^2 + \|U_{k+i|k}\|^2_{R_s} + \|X_{k+N|k}\|^2_{S_s},
\end{align*}
where $Q_s$, $R_s$ and $S_s$ are designed as in Theorem \ref{thm_consensus_inverse_semi}, respectively.

A centralized RHC-based consensus strategy would be: At each time instant $k\geqslant 0$, Problem \ref{problem_CRHC_Semi} is solved
for the overall system in (\ref{equ_overall_system}) to generate the optimal control sequence $\bar{U}_k^*$, and the consensus protocol
takes the first element of $\bar{U}_k^*$, i.e., $U_k = U_{k|k}^*$. We will show that this procedure is feasible with appropriate initial data
and the closed-loop system can reach consensus in the following subsection \ref{subsub_feasibility_semi}.

\subsubsection{Distributed RHC Consensus Strategy}
Problem \ref{problem_CRHC_Semi} is a centralized one, requiring a centralized controller.
In this subsection, we develop conditions to make this optimization problem be distributed associated with each agent.
\begin{lemma}\label{lemma_distribution_Semi}
In the cost function $J_s(X_k,\bar{U}_k)$, if the parameter is designed such that
$W = \mu I_M$, and $\mathcal{L} = \mathcal{L}^{\rm T}$, where $\mu >0$ is a scalar, then
$J_s(X_k,\bar{U}_k)$ can be distributively assigned to each agent $i$ by the following sub-cost function as
\begin{align*}
J_s^i(x^i_k,\bar{u}^i_k) =& \sum_{l=0}^{N-1} \sum_{j\in \mathcal{N}_i}a_{ij}\mu[(x_{k+l|k}^i)^{\rm T} Q_2 (x_{k+l|k}^i-x_{k+l|k}^j)\\
                          & - \frac{\mu}{\alpha} [(u_{k+l|k}^i)^{\rm T} R_2 (u_{k+l|k}^i-u_{k+l|k}^j)]\\
                          & + \frac{c \mu }{1+\alpha}\|\sum_{j\in \mathcal{N}_i}a_{ij}(x_{k+l|k}^i-x_{k+l|k}^j)\|_{H}^2 \\
                          & + \frac{\mu}{c\alpha} \|u_{k+l|k}^i\|_{R_2}^2\\
                          & + \mu \sum_{j\in \mathcal{N}_i}a_{ij}[(x_{k+N|k}^i)^{\rm T} S_2 (x_{k+N|k}^i-x_{k+N|k}^j).
\end{align*}
That is, $J_s(X_k,\bar{U}_k) = \sum_{i=1}^{M}J_s^i(x^i_k,\bar{u}^i_k)$.
\end{lemma}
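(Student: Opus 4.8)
The plan is to substitute the distributed-design choices $W=\mu I_M$ and $\mathcal{L}=\mathcal{L}^{\rm T}$ directly into the weighting matrices of $J_s(X_k,\bar U_k)$ and then expand each quadratic form blockwise, exploiting the structure $\mathcal{L}=\mathcal{D}-\mathcal{A}$ of the (now symmetric) unit-weight Laplacian. Under these choices one has $S_1=\mu\mathcal{L}$, $\frac{c S_1\mathcal{L}}{1+\alpha}=\frac{c\mu}{1+\alpha}\mathcal{L}^2$, $R_1=\frac{\mu}{c\alpha}I_M-\frac{\mu}{\alpha}\mathcal{L}$ and $S_s=\mu(\mathcal{L}\otimes S_2)$; a preliminary remark is that these choices automatically meet the symmetry requirements imposed in Theorem \ref{thm_consensus_inverse_semi} (that $W\mathcal{L}$ and $W\mathcal{L}^2$ be symmetric), so the lemma is compatible with the inverse-optimal design and $Q_s,R_s,S_s$ retain their prescribed forms.

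The next step is to record two elementary Kronecker--Laplacian identities, valid for any symmetric $\Theta\geqslant 0$. Writing $\mathcal{L}=\mathcal{D}-\mathcal{A}$ and using $deg_i=\sum_{j\in\mathcal{N}_i}a_{ij}$ gives $X^{\rm T}(\mathcal{L}\otimes\Theta)X=\sum_{i=1}^{M}deg_i\,(x^i)^{\rm T}\Theta x^i-\sum_{i=1}^{M}\sum_{j\in\mathcal{N}_i}a_{ij}(x^i)^{\rm T}\Theta x^j=\sum_{i=1}^{M}\sum_{j\in\mathcal{N}_i}a_{ij}(x^i)^{\rm T}\Theta(x^i-x^j)$; factoring $\mathcal{L}^2\otimes\Theta=(\mathcal{L}\otimes I_n)(I_M\otimes\Theta)(\mathcal{L}\otimes I_n)$ (here $\mathcal{L}=\mathcal{L}^{\rm T}$ is essential) and noting that the $i$-th block of $(\mathcal{L}\otimes I_n)X$ equals $\sum_{j\in\mathcal{N}_i}a_{ij}(x^i-x^j)$ gives $X^{\rm T}(\mathcal{L}^2\otimes\Theta)X=\sum_{i=1}^{M}\big\|\sum_{j\in\mathcal{N}_i}a_{ij}(x^i-x^j)\big\|_{\Theta}^2$; and trivially $X^{\rm T}(I_M\otimes\Theta)X=\sum_{i=1}^{M}\|x^i\|_{\Theta}^2$.

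I would then apply these identities to each weighting term. The $Q_s$-weighted stage cost $\|X_{k+l|k}\|_{Q_s}^2=\|X_{k+l|k}\|_{\mu(\mathcal{L}\otimes Q_2)}^2+\|X_{k+l|k}\|_{\frac{c\mu}{1+\alpha}(\mathcal{L}^2\otimes H)}^2$ produces the $Q_2$-disagreement term and the $H$-weighted term of $J_s^i$; the $R_s=R_1\otimes R_2$ term, via $R_1=\frac{\mu}{c\alpha}I_M-\frac{\mu}{\alpha}\mathcal{L}$ and symmetry of $R_2=\alpha B^{\rm T}S_2B$, produces $\frac{\mu}{c\alpha}\sum_i\|u_{k+l|k}^i\|_{R_2}^2$ together with $-\frac{\mu}{\alpha}\sum_i\sum_{j\in\mathcal{N}_i}a_{ij}(u_{k+l|k}^i)^{\rm T}R_2(u_{k+l|k}^i-u_{k+l|k}^j)$; and the terminal cost $\|X_{k+N|k}\|_{S_s}^2$ becomes $\mu\sum_i\sum_{j\in\mathcal{N}_i}a_{ij}(x_{k+N|k}^i)^{\rm T}S_2(x_{k+N|k}^i-x_{k+N|k}^j)$. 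Summing over $l=0,\dots,N-1$ and interchanging the two finite sums, $\sum_l\sum_i=\sum_i\sum_l$, regroups the whole expression as $\sum_{i=1}^{M}J_s^i(x_k^i,\bar u_k^i)$, because the $i$-th group collects exactly the contributions listed in the statement.

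The work is essentially bookkeeping: one must carry the scalars $\mu,c,\alpha$ through the Kronecker factorizations and repeatedly invoke the symmetry of $\mathcal{L},Q_2,S_2,R_2$ and $a_{ij}=a_{ji}$ so that each bilinear form $(x^i)^{\rm T}\Theta x^j$ collapses into the disagreement form $(x^i)^{\rm T}\Theta(x^i-x^j)$. The closest thing to an obstacle is to verify that each $J_s^i$ depends only on agent $i$'s own quantities $\{x_{k+l|k}^i,u_{k+l|k}^i\}$ and its neighbors' quantities $\{x_{k+l|k}^j,u_{k+l|k}^j:j\in\mathcal{N}_i\}$ --- which is precisely what makes the assignment genuinely distributed; note in particular that, owing to the possibly sign-indefinite $Q_2$ cross term and the negative coupling $-\frac{\mu}{\alpha}$, the individual $J_s^i$ need not be sign-definite, and no such claim is required --- only their sum, equal to $J_s(X_k,\bar U_k)$, is the object being minimized.
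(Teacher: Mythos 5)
Your proposal is correct and is essentially the paper's own argument: the same blockwise Laplacian--Kronecker identities (the $\mathcal{L}\otimes\Theta$ disagreement expansion and the factorization $\mathcal{L}^2\otimes H=(\mathcal{L}\otimes I_n)(I_M\otimes H)(\mathcal{L}\otimes I_n)$ using $\mathcal{L}=\mathcal{L}^{\rm T}$), merely run from the global cost down to the local pieces rather than summing the sub-costs up to $J_s$ as the paper does. The added remarks (that $W=\mu I_M$ with symmetric $\mathcal{L}$ satisfies the theorem's symmetry requirements, and that the individual $J_s^i$ need not be sign-definite) are harmless and consistent with the paper.
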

\begin{proof}
Considering the first term $\sum_{j\in \mathcal{N}_i}a_{ij}\mu(x_{k+l|k}^i)^{\rm T} Q_2 (x_{k+l|k}^i-x_{k+l|k}^j)$, one has
\begin{align*}
& \sum_{j\in \mathcal{N}_i}a_{ij}\mu(x_{k+l|k}^i)^{\rm T} Q_2 (x_{k+l|k}^i-x_{k+l|k}^j)\\
=& (x_{k+l|k}^i)^{\rm T}\sum_{j=1}^{M}l_{ij}Q_2x_{k+l|k}^j,
\end{align*}
where $l_{ij}$ is the $(i,j)$-th element of $\mathcal{L}$. As a result,
\begin{align*}
& \sum_{i=1}^{M}\sum_{j\in \mathcal{N}_i}a_{ij}\mu(x_{k+l|k}^i)^{\rm T} Q_2 (x_{k+l|k}^i-x_{k+l|k}^j)\\
=&\|X_{k+l|k}\|_{[\mu(\mathcal{L}\otimes Q_2)]}^2.
\end{align*}
Similarly, we have
\begin{align*}
& \sum_{i=1}^{M}\sum_{j\in \mathcal{N}_i}a_{ij}\frac{\mu}{\alpha}(u_{k+l|k}^i)^{\rm T} R_2 (u_{k+l|k}^i-u_{k+l|k}^j)\\
=&\|U_{k+l|k}\|_{[\frac{\mu}{\alpha}(\mathcal{L}\otimes R_2)]}^2,
\end{align*}
And that
\begin{align*}
& \sum_{i=1}^{M}\sum_{j\in \mathcal{N}_i}a_{ij}\mu(x_{k+N|k}^i)^{\rm T} S_2 (x_{k+N|k}^i-x_{k+N|k}^j)\\
=&\|X_{k+l|k}\|_{[\mu(\mathcal{L}\otimes S_2)]}^2.
\end{align*}
Furthermore, we get
\begin{align*}
&\sum_{i=1}^{M} \frac{c \mu }{1+\alpha} \|\sum_{j\in \mathcal{N}_i}a_{ij}(x_{k+l|k}^i-x_{k+l|k}^j)\|_{H}^2\\
=& \sum_{i=1}^{M} \frac{c \mu }{1+\alpha} [(\mathcal{L}_i \otimes I_M) X_{n|k}]^{\rm T} H[(\mathcal{L}_i \otimes I_M) X_{n|k}]\\
= &\frac{c \mu }{1+\alpha} X_{n|k}^{\rm T} (\mathcal{L} \otimes I_M)^{\rm T} (I_M \otimes H)(\mathcal{L} \otimes I_M) X_{n|k}\\
= & \|X_{n|k}\|^2_{\frac{c S_1 \mathcal{L}}{1+\alpha}\otimes H},
\end{align*}
where $\mathcal{L}_i$ denotes the $i$-th row of $\mathcal{L}$.
By collectively considering above results, we can obtain that $J_s(X_k,\bar{U}_k) = \sum_{i=1}^{M}J_s^i(x^i_k,\bar{u}^i_k)$.
The proof is completed.
\end{proof}

Next, we need to make the constraints in Problem \ref{problem_CRHC_Semi} to be distributively satisfied among agents in the following lemma.
\begin{lemma}\label{lemma_constraint}
For each agent $i$, if the constraints are designed as $u_{k+l|k}^i \in \mathcal{U}^i$, $l=0,\cdots, N-1$, and $\sum_{j\in \mathcal{N}_i}a_{ij}\mu(x_{k+N|k}^i)^{\rm T} S_2 (x_{k+N|k}^i-x_{k+N|k}^j) \leqslant \frac{\beta_s^2}{M}$, then the constraints in Problem \ref{problem_CRHC_Semi} are satisfied.
\end{lemma}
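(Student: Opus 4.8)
The plan is to check the two families of constraints appearing in Problem \ref{problem_CRHC_Semi} one at a time, namely the per-step input constraint $U_{k+l|k}\in\mathcal{U}$ for $l=0,\dots,N-1$ and the terminal constraint $X_{k+N|k}\in\mathcal{O}_{\beta_s}$, working under the same distributed-design conditions invoked in Lemma \ref{lemma_distribution_Semi} (that is, $W=\mu I_M$ and $\mathcal{L}=\mathcal{L}^{\rm T}$), since this lemma lives in the distributed RHC subsection. The load-bearing observation is already available: the Laplacian/Kronecker rewriting of the weighted quadratic forms carried out in the proof of Lemma \ref{lemma_distribution_Semi}.

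For the input constraint I would simply exploit the product structure $\mathcal{U}=\mathcal{U}^1\times\cdots\times\mathcal{U}^M$ together with $U_{k+l|k}={\rm col}(u_{k+l|k}^1,\dots,u_{k+l|k}^M)$: the stacked vector lies in $\mathcal{U}$ if and only if each block $u_{k+l|k}^i$ lies in $\mathcal{U}^i$, which is exactly the local input constraint imposed on agent $i$. No further work is required here.

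For the terminal constraint I would express $\|X_{k+N|k}\|_{S_s}^2$ as a sum of the $M$ local terminal forms. Under $W=\mu I_M$ and $\mathcal{L}=\mathcal{L}^{\rm T}$ one has $S_s=S_1\otimes S_2=\mu\,\mathcal{L}\otimes S_2$, and, exactly as in the computation in the proof of Lemma \ref{lemma_distribution_Semi} (using $l_{ii}=\sum_{j}a_{ij}$ and $l_{ij}=-a_{ij}$ for $j\neq i$),
\[
\|X_{k+N|k}\|_{S_s}^2 = X_{k+N|k}^{\rm T}(\mu\,\mathcal{L}\otimes S_2)X_{k+N|k} = \sum_{i=1}^{M}\sum_{j\in\mathcal{N}_i}a_{ij}\,\mu\,(x_{k+N|k}^i)^{\rm T} S_2(x_{k+N|k}^i-x_{k+N|k}^j).
\]
If each summand obeys the imposed local terminal bound $\sum_{j\in\mathcal{N}_i}a_{ij}\mu\,(x_{k+N|k}^i)^{\rm T} S_2(x_{k+N|k}^i-x_{k+N|k}^j)\leqslant\beta_s^2/M$, then summing over $i=1,\dots,M$ gives $\|X_{k+N|k}\|_{S_s}^2\leqslant\beta_s^2$, i.e. $X_{k+N|k}\in\mathcal{O}_{\beta_s}$.

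This argument is essentially bookkeeping, so I do not expect a genuine obstacle; the two points that merit a line of care are (i) that the distributed-design conditions indeed make $S_1=\mu\mathcal{L}$ symmetric and positive semidefinite, so $\|\cdot\|_{S_s}$ is a bona fide weighted seminorm and $\mathcal{O}_{\beta_s}$ coincides with the set built from the ARE solution in Theorem \ref{thm_consensus_inverse_semi}; and (ii) that the local terminal bound is genuinely decoupled, each term depending only on $x_{k+N|k}^i$ and the predicted terminal states of the neighbors of $i$, which is consistent with the information pattern of the distributed scheme. Collecting the input and terminal parts then establishes that all constraints of Problem \ref{problem_CRHC_Semi} hold.
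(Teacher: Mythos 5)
Your proposal is correct and follows essentially the same route as the paper: the input constraints are handled by the product structure $\mathcal{U}=\mathcal{U}^1\times\cdots\times\mathcal{U}^M$, and the terminal constraint by the same Laplacian/Kronecker decomposition of $\|X_{k+N|k}\|_{S_s}^2$ used in Lemma \ref{lemma_distribution_Semi}, summed over the agents to give the bound $\beta_s^2$. Your extra remarks on $S_1=\mu\mathcal{L}$ being symmetric and on the decoupled information pattern only make explicit what the paper leaves implicit.
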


\begin{proof}
Firstly, it can be seen that $u_{k+l|k}^i \in \mathcal{U}^i$, for all $i=1,\cdots, M$ implies $U_{k+l|k}^i \in \mathcal{U}$.
Second, following the similar line of the proof Lemma \ref{lemma_distribution_Semi}, one has $\sum_{i=1}^M \sum_{j\in \mathcal{N}_i}a_{ij}\mu(x_{k+N|k}^i)^{\rm T} S_2 (x_{k+N|k}^i-x_{k+N|k}^j) =\|X_{k+i|k}\|^2_{S_s}\leqslant \beta_s^2$.
This implies $X_{k+N|k} \in \mathcal{O}_{\beta_s}$. The proof is completed.
\end{proof}

Now the optimization problem that is associated with each agent $i$, $i=1\cdots, M$, is formulated as follows:
\begin{problem}\label{problem_semi_distribution}
$\min \bar{u}_k^{i*} = {\rm arg} J_s^i(x_k^i,\bar{u}_k^i), {\text subject \; to}$
\begin{align*}
& x_{k+l+1|k}^i =A x_{k+l|k}^i + B u_{k+l|k}^i,\\
& u_{k+l|k}^i \in\mathcal{U}^i, l=0\cdots, N-1, \\
& \sum_{j\in \mathcal{N}_i}a_{ij}\mu(x_{k+N|k}^i)^{\rm T} S_2 (x_{k+N|k}^i-x_{k+N|k}^j) \leqslant \frac{\beta_s^2}{M},
\end{align*}
where $\bar{u}_{k}^i = {\rm col}(u_{k|k}^i, \cdots, u_{k+N-1|k}^i)$, and $x_k^i = x_{k|k}^i$.
\end{problem}

The distributed RHC-based consensus strategy is summarized as follows: For each agent $i$, at time instant $k$,
it receives information $x_{k+p|k}^j$ and $u_{k+l|k}^j$, $p=0,\cdots N$, $l=0,\cdots, N-1$ from its neighbors via communication network,
then solves Problem \ref{problem_semi_distribution}, and sends its
state information and control information to agents that connect to it.
Finally, the control input is taken as $u_k^i = u_{k|k}^{i*}$.

It can be seen that the distributed RHC strategy is equivalent to the centralized one by appropriately assigning the cost functions and systems constraints as above. So in the following, the performance analysis of the distributed RHC strategy can be executed via the centralized strategy.
\subsection{Feasibility Analysis and Consensus Properties}\label{subsub_feasibility_semi}
To make the RHC-based consensus strategy valid, it is necessary to ensure Problem \ref{problem_CRHC_Semi} is feasible at each time instant,
and the closed-loop system under the RHC-based consensus protocol can reach consensus. The feasibility is ensured in the following theorem.
\begin{theorem}\label{thm_semi_feasibility}
For the overall system (\ref{equ_overall_system}), if Problem \ref{problem_CRHC_Semi} has a solution at time instant $k$, then it admits a solution
at time instant $k+1$, for all $k\geqslant 0$.
\end{theorem}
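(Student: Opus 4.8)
The plan is to use the standard receding-horizon recursive-feasibility argument: construct an explicit feasible control sequence at time $k+1$ from the optimal one at time $k$ by discarding the already-applied first element and appending the terminal (inverse-optimal) consensus protocol. Suppose Problem \ref{problem_CRHC_Semi} is feasible at time $k$ with minimizer $\bar{U}_k^* = {\rm col}(U_{k|k}^*,\dots,U_{k+N-1|k}^*)$ and associated predicted trajectory $X_{k|k}^*,\dots,X_{k+N|k}^*$ satisfying $U_{k+i|k}^*\in\mathcal{U}$ for $i=0,\dots,N-1$ and $X_{k+N|k}^*\in\mathcal{O}_{\beta_s}$. Since the closed loop applies $U_k=U_{k|k}^*$, the true state at $k+1$ is $X_{k+1}=X_{k+1|k}^*$.

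Next I would propose the candidate sequence at $k+1$:
\begin{equation*}
\bar{U}_{k+1} = {\rm col}\big(U_{k+1|k}^*,\,U_{k+2|k}^*,\,\dots,\,U_{k+N-1|k}^*,\;K^* X_{k+N|k}^*\big),
\end{equation*}
i.e. reuse the tail of the old plan for the first $N-1$ steps and use the terminal feedback $K^*$ for the last step. Because the augmented dynamics (\ref{equ_overall_system}) are time-invariant and the initial condition matches ($X_{k+1|k+1}=X_{k+1}=X_{k+1|k}^*$), the first $N-1$ predicted states under $\bar U_{k+1}$ coincide exactly with $X_{k+1|k}^*,\dots,X_{k+N|k}^*$, and the $N$-th predicted state is $X_{k+N+1|k+1}=(I_M\otimes A+(I_M\otimes B)K^*)X_{k+N|k}^*$.

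Then I would check the three constraint classes. The input constraints $U_{k+i|k}^*\in\mathcal{U}$ for $i=1,\dots,N-1$ are inherited verbatim from the feasibility at time $k$. For the appended input, since $X_{k+N|k}^*\in\mathcal{O}_{\beta_s}$, Lemma \ref{lemma_terminal_set} gives $K^*X_{k+N|k}^*\in\mathcal{U}$. For the terminal constraint, $\mathcal{O}_{\beta_s}$ is forward invariant for the closed-loop system (\ref{equ_closed_loop_system1}) by Lemma \ref{lemma_level_set} (with $\beta_s\le\epsilon_1$ as in Lemma \ref{lemma_terminal_set}), so $X_{k+N|k}^*\in\mathcal{O}_{\beta_s}$ implies $X_{k+N+1|k+1}\in\mathcal{O}_{\beta_s}$. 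Hence $\bar U_{k+1}$ is a feasible point of Problem \ref{problem_CRHC_Semi} at time $k+1$, which establishes the claim; an induction on $k$ then yields feasibility for all $k\ge 0$. There is no serious obstacle here — the only point requiring a little care is lining up the shifted indices and invoking forward invariance of $\mathcal{O}_{\beta_s}$ and the input-admissibility property (Lemmas \ref{lemma_level_set}--\ref{lemma_terminal_set}) exactly at the terminal step; everything else is bookkeeping on the time-invariant dynamics.
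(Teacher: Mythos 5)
Your proposal is correct and follows essentially the same argument as the paper: shift the tail of the optimal sequence from time $k$, append the terminal feedback $K^*X_{k+N|k}^*$, and verify input admissibility and terminal-set membership via Lemmas \ref{lemma_level_set} and \ref{lemma_terminal_set}. (Your indexing of the candidate sequence with $N$ elements is in fact cleaner than the paper's statement of it.)
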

\begin{proof}
According to the condition, we can assume that the optimal solution to Problem \ref{problem_CRHC_Semi} is $\bar{U}_{k}^*$, where $\bar{U}_{k}^* = {\rm col}(U_{k|k}^*, \cdots, U_{k+N|k}^*)$, and the corresponding optimal state sequence is $\bar{X}_{k}^* ={\rm col}(X_{k+1|k}^*, \cdots, X_{k+N|k}^*)$. At time instant $k+1$, construct a control sequence as $\bar{U}^f_{k+1} \triangleq {\rm col }(U_{k+1|k}^*,
\cdots, U_{k+N|k}^*, K^*X_{k+N|k}^*)$. The corresponding state sequence is denoted by $\bar{X}_{k+1}^f ={\rm col}(X_{k+1|k+1}^f, \cdots, X_{k+N+1|k+1}^f)$, and it is easy to see that $X^f_{k+l+1|k+1} = X^*_{k+l|k}$, $l=1, \cdots, N$. Firstly, it is true that $\bar{U}^f_{k+l|k+1}\in\mathcal{U}$, for all $l=1,\cdots, N-1$ according to the construction of $\bar{U}^f_{k+1}$.
Secondly, since $X_{k+N|k}^*\in\mathcal{O}_{\beta_s}$, it follows that $U_{k+N|k+1}^f = K^*X_{k+N|k}^* \in \mathcal{U}$, and $X^f_{k+l+1|k+1}\in\mathcal{O}_{\beta_s}$. Thus, $\bar{U}^f_{k+1}$ makes all the constraints at time $k+1$ fulfilled, and
it is a feasible solution to Problem \ref{problem_CRHC_Semi} at time $k+1$. The proof is completed.
\end{proof}

Furthermore, the consensus result for the MASs using RHC strategy is reported in the following theorem.
\begin{theorem}\label{thm_semi_RHC_consensus}
For the system in (\ref{equ_overall_system}), if the designed conditions in Theorem \ref{thm_consensus_inverse_semi} hold,
then under the designed RHC-based consensus protocol, the closed-loop system reaches {\em convergent consensus}, and the control input constraints
are fulfilled.
\end{theorem}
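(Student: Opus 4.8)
The plan is to take the optimal value function $V_N(\cdot)$ of Problem~\ref{problem_CRHC_Semi} as a Lyapunov function for set stability with respect to the consensus set $\mathcal{N}={\rm Ker}(\mathcal{L}\otimes I_n)$, in the spirit of Lemma~\ref{lemma_inverse_optimality}, and then to strengthen plain consensus to {\em convergent consensus} by showing that the closed loop eventually behaves like an autonomous semi-stable linear system. Feasibility and constraint satisfaction come first and are immediate: assuming Problem~\ref{problem_CRHC_Semi} is feasible at $k=0$ for admissible initial data, Theorem~\ref{thm_semi_feasibility} propagates feasibility to every $k\ge0$, and since the applied input $U_k=U_{k|k}^*$ is a decision variable of a feasible problem it satisfies $U_k\in\mathcal{U}$, i.e., $u_k^i\in\mathcal{U}^i$ for all $i$ and all $k$.

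For consensus I would establish the descent inequality along the closed loop by the standard receding-horizon device: at time $k+1$ insert the shifted-and-appended sequence ${\rm col}(U_{k+1|k}^*,\dots,U_{k+N|k}^*,K^*X_{k+N|k}^*)$ built in the proof of Theorem~\ref{thm_semi_feasibility}, and use the ARE~(\ref{equ_ARE}) satisfied by $S_s$ --- which makes $\|X\|_{S_s}^2$ an exact cost-to-go, so that $\|(I_M\otimes A+(I_M\otimes B)K^*)X\|_{S_s}^2-\|X\|_{S_s}^2=-\|X\|_{Q_s}^2-\|K^*X\|_{R_s}^2$ --- to cancel the terminal contribution. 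This yields $V_N(X_{k+1})-V_N(X_k)\le-\|X_k\|_{Q_s}^2-\|U_k\|_{R_s}^2\le0$. Because $Q_s\ge0$ with ${\rm Ker}(Q_s)=\mathcal{N}$ (shown in Theorem~\ref{thm_consensus_inverse_semi}), one has $V_N(X_k)\ge\|X_k\|_{Q_s}^2\ge\sigma_{\min}(Q_s)\,|X_k|_{\mathcal{N}}^2$, a $\mathcal{K}$-function lower bound; and on the terminal set $\mathcal{O}_{\beta_s}$ the feedback $K^*$ is admissible (Lemma~\ref{lemma_terminal_set}) and, by telescoping the same ARE identity over the horizon, steers $X_k$ at cost exactly $\|X_k\|_{S_s}^2$, so $V_N(X_k)\le\|X_k\|_{S_s}^2\le\sigma_{\max}(S_s)\,|X_k|_{\mathcal{N}}^2$ on that neighbourhood of $\mathcal{N}$. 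These bounds together with the descent inequality verify the conditions of Theorem~\ref{thm_set_stability}, so the closed loop is asymptotically stable with respect to $\mathcal{N}$; Theorem~\ref{thm_AS_Consensus} then gives consensus, and summing the descent forces $|X_k|_{\mathcal{N}}\to0$ and $U_k\to0$.

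The delicate part is {\em convergent} consensus, i.e., $\sup_k\|x_k^i\|<\infty$. Since $\|X_k\|_{S_s}\le\sqrt{\sigma_{\max}(S_s)}\,|X_k|_{\mathcal{N}}\to0$, there is a finite $\bar k$ with $X_{\bar k}\in\mathcal{O}_{\beta_s}$. The key claim is that once the state lies in $\mathcal{O}_{\beta_s}$ the receding-horizon law collapses to $U_k=K^*X_k$: completing the square as in the proof of Lemma~\ref{lemma_inverse_optimality} but retaining the terminal term, the cost of any feasible sequence started from $X_k\in\mathcal{O}_{\beta_s}$ equals $\|X_k\|_{S_s}^2+\sum_{i=0}^{N-1}\|U_{k+i|k}-K^*X_{k+i|k}\|_{\bar R_s}^2$ with $\bar R_s:=R_s+(I_M\otimes B)^{\rm T}S_s(I_M\otimes B)>0$, while the terminal constraint $X_{k+N|k}\in\mathcal{O}_{\beta_s}$ is inactive because $\mathcal{O}_{\beta_s}$ is forward invariant under $I_M\otimes A+(I_M\otimes B)K^*$ (Lemma~\ref{lemma_level_set}) and the $K^*$-trajectory stays in $\mathcal{U}$ (Lemma~\ref{lemma_terminal_set}); hence the unique minimiser is the $K^*$-sequence. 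Consequently, for all $k\ge\bar k$, $X_{k+1}=(I_M\otimes A+(I_M\otimes B)K^*)X_k$, an autonomous recursion whose system matrix leaves $\mathcal{N}$ invariant (there $K^*X=0$), acts on $\mathcal{N}$ as a copy of $A$, and acts on $\mathbb{R}^{Mn}/\mathcal{N}$ as a Schur map --- this last being exactly the set stability of Theorem~\ref{thm_consensus_inverse_semi}. Its spectrum therefore lies in the closed unit disk and any unimodular eigenvalue is semisimple, i.e., the closed-loop matrix is itself semi-stable, so its powers are uniformly bounded and $\{X_k\}_{k\ge\bar k}$ is bounded; adjoining the finitely many states $X_0,\dots,X_{\bar k}$ (each of finite norm since $X_0$ is and the inputs lie in the compact set $\mathcal{U}$) gives $\sup_k\|X_k\|<\infty$, hence $\sup_k\|x_k^i\|<\infty$ for every $i$, and with the consensus already proven this is convergent consensus.

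I expect this last step to be the main obstacle, and within it the two sub-claims that the terminal constraint is inactive on $\mathcal{O}_{\beta_s}$ (so the RHC there reduces to the unconstrained LQR solved by $K^*$) and that the resulting closed-loop matrix inherits semi-stability from $A$ --- the latter requiring a short argument that no Jordan block at eigenvalue $1$ can couple $\mathcal{N}$ with the quotient, since $1$ is a simple eigenvalue of $A$ and the quotient dynamics is Schur. The remaining ingredients are the routine RHC Lyapunov machinery already exercised in Lemma~\ref{lemma_inverse_optimality} and Theorem~\ref{thm_semi_feasibility}.
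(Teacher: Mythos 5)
Your proposal is correct and follows essentially the same route as the paper: feasibility via the shifted sequence, a value-function descent obtained from the ARE satisfied by $S_s$ forcing finite-time entry into $\mathcal{O}_{\beta_s}$, the observation that inside the terminal set the receding-horizon law collapses to $U_k=K^*X_k$ (your completion-of-squares identity is just an explicit form of the paper's dynamic-programming argument), and finally boundedness from semi-stability of $I_M\otimes A+(I_M\otimes B)K^*$, which the paper establishes through the Jordan decomposition of $\mathcal{L}$ into the $A$-block and Schur blocks $A+c\lambda_iBK_2$ — the same decomposition you sketch via $\mathcal{N}$ and its quotient.
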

\begin{proof}
It is first proved that, for any state in $X_0 \in \mathcal{P}$, the system state trajectory will enter the terminal set $\mathcal{O}_{\beta_s}$,
where $\mathcal{P}$ denotes the set of all the initial states that make the input constraints and terminal constraints fulfilled.
This is proved by contradiction. Assume that the state will never enter the terminal set $\mathcal{O}_{\beta_s}$.
Define the value of the optimal cost function at time $k$ by $J_s^*(X_k, \bar{U}_k)$. According to the sub-optimality of $U_{k+1}^f$, one has
$J_s^*(X_{k+1},\bar{U}_{k+1}) - J_s^*(X_k, \bar{U}_k) \leqslant J_s(X_{k+1},U_{k+1}^f) - J_s^*(X_k, \bar{U}_k)$. Specifically, it obtains that
\begin{align*}
 & J_s(X_{k+1},U_{k+1}^f) - J_s^*(X_k, \bar{U}_k)\\
& = -\|X_k\|_{Q_s}^2 - \|U_k\|^2_{R_s}  + \|X_{k+1+N|k+1}^f\|_{S_s}^2\\
&   + \|X_{N+k|k}\|_{Q_s + (K^*)^{\rm T} R_s K^* - S_s}^2 \\
& = -\|X_k\|_{Q_s}^2 - \|U_k\|^2_{R_s} + \|X_{N+k|k}\|^2_{\Delta},
\end{align*}
where $\Delta = Q_s + (K^*)^{\rm T} R_s K^* - S_s + [(I_M\otimes A)+ (I_m\otimes B)K^*]^{\rm T}S_s[(I_M\otimes A)+ (I_m\otimes B)K^*]$.
According to the design conditions in Theorem \ref{thm_consensus_inverse_semi}, it can be seen that $\Delta = 0$. As a result, $J_s^*(X_{k+1},\bar{U}_{k+1}) - J_s^*(X_k, \bar{U}_k) \leqslant -\|X_k\|_{Q_s}^2$. Since the state trajectory will not enter the terminal set $\mathcal{O}_s$, there exists a constant $\epsilon>0$ such that $J_s^*(X_{k+1},\bar{U}_{k+1}) - J_s^*(X_k, \bar{U}_k) \leqslant - \sigma_{\min}(Q_s)\epsilon$. Making a summation from $k=0$ to $l$, one has $J_s^*(X_{l+1},\bar{U}_{l+1}) - J_s^*(X_0, \bar{U}_0) \leqslant -(l+1)\sigma_{\min}(Q_s)\epsilon$. Hence, $\lim_{l\rightarrow \infty} J_s^*(X_{l+1},\bar{U}_{l+1}) \leqslant J_s^*(X_0, \bar{U}_0)-\lim_{l\rightarrow \infty}(l+1)\sigma_{\min}(Q_s)\epsilon = -\infty$, where the fact that $J_s^*(X_0, \bar{U}_0)\geqslant 0$ and is finite is used. On the other hand, we have $J_s^*(X_{l+1},\bar{U}_{l+1})\geqslant 0$. This is a contradiction. As a result, the state trajectory will enter the
terminal set in finite steps.

Next, we prove that the closed-loop system reaches consensus by showing that it is asymptotically set-stable with respect to the set
$\mathcal{N} = {\rm Ker}(S_s)$. Assume at some time instant $k= k_1$, $X_{k_1} \in \mathcal{O}_{\beta_s}$.
Note that when $X_{k_1} \in \mathcal{O}_{\beta_s}$, all the constraints are satisfied. On the other hand, according to Theorem \ref{thm_consensus_inverse_semi}, $U_k = K^* X_k$ is optimal with respect to the performance index $J(X_k, U_k)$ with the optimal value equal to
$\|X_k\|_{S_s}^2$. As a result, $\|X_{k+N|k}\|_{S_s}^2$ can be equivalently written as $\min\{J(X_{k+N|k},\bar{U}_{k})\}$,
where $J(X_{k+N|k},\bar{U}_{k+N}) = \sum_{l=0}^{\infty}\|X_{k+N+l|k}\|_{Q_s}^2 + \|U_{k+N+l|k}\|^2_{R_s}$.
Therefore, $\min{J_s(X_k, \bar{U}_k)} = \min\{\sum_{l=0}^{N-1}\|X_{k+l|k}\|_{Q_s}^2 + \|U_{k+l|k}\|^2_{R_s}
+ \min\{\sum_{l=N}^{\infty}\|X_{k+l|k}\|_{Q_s}^2 + \|U_{k+l|k}\|^2_{R_s}\}$. According to the Dynamic Programming principle,
it can be seen that when $k\geqslant k_1$, the optimal solution to Problem \ref{problem_CRHC_Semi} is exactly $\bar{U}_k^* = {\rm col}(K^* X_{k|k}, \cdots, K^* X_{k+N-1|k})$, and control input is the optimal one $U_k = K^* X_k$.
Applying the results in Theorem \ref{thm_consensus_inverse_semi}, the closed-loop system for (\ref{equ_overall_system}) is asymptotically set stable with respect to the set $\mathcal{N} = {\rm Ker}(S_s)$, and the consensus is reached.

Finally, we prove that the closed-loop system reaches convergent consensus. It has been shown that when the state enters the terminal set, the closed-loop system becomes (\ref{equ_closed_loop_system1}). Since $\mathcal{L}$ contains a spanning tree, there exists a nonsingular matrix $T_1$ such that $\mathcal{L} = T^{-1}_1J_1 T_1$, where $J_1$ is in the Jordan form with $J_1 = {\rm diag}(0, \Lambda_1,\cdots, \Lambda_p)$. Define $T = T_1\otimes I_M$. Take a similar transform for the system in (\ref{equ_closed_loop_system1}), and denote $Y_k = T^{-1} X_k$, one has
\begin{equation}\label{equ_semi_system_transformed}
Y_{k+1} = \left[
            \begin{array}{cccc}
              A & 0 & \cdots & 0 \\
              0 & A+c\lambda_2 B K_2 & \times & \times \\
              \vdots & 0 & \ddots & \times \\
              0 & 0 & 0 & A+c\lambda_M B K_2 \\
            \end{array}
          \right] Y_k,
\end{equation}
where $\lambda_i$ are the nonzero eigenvalues of $\mathcal{L}$, $i=2,\cdots, M$.
According to the similar argument of Lemma 2 in \cite{Hengster13Auto_synchronization}
and Theorem 2 in \cite{You_11_TAC_consensus}, the necessary and sufficient condition for the system
(\ref{equ_closed_loop_system1}) to reach consensus is ${\rm spec}(A + c \lambda_i  B K_2)<1$. Since we have proved the the closed-loop system (\ref{equ_closed_loop_system1}) reaches consensus, it implies that ${\rm spec}(A + c \lambda_i  B K_2)<1$, for all $i=2,\cdots, M$ in (\ref{equ_semi_system_transformed}). Note that $A$ is semistable. As a result, the system (\ref{equ_semi_system_transformed}) is semistable.
Thus, the closed-loop system in (\ref{equ_closed_loop_system1}) is semistable,
implying that given $\|X_0\| <\infty$, $\|X_k\|$ is bounded for all $k\geqslant 0$.
Therefore, the closed-loop system will reach convergent consensus. The proof is completed.
\end{proof}

\section{Consensus for Subsystems with General Dynamics}\label{Sec_General_linear}
In this section, we extend the developed results to MASs with unstable subsystems.
Firstly, the consensus protocol that achieves optimal control performance and ensures consensus is proposed by the inverse
optimality-based approach. Then the RHC-based consensus strategy is designed. Finally, the feasibility and consensus results are presented.
\subsection{Optimal Consensus Protocol Design}
For the system in (\ref{equ_agent}), when the matrix $A$ is unstable (i.e., not semistable),
denote the $i$-th unstable eigenvalue by $\lambda_i^u(A)$,
$1\leqslant i\leqslant n$. For the unstable subsystem, we have the following result on a modified ARE.
The solution to the modified ARE depends on the properties of $A$ and $B$.
\begin{lemma}\label{lemma_ARE_like}
For the system in (\ref{equ_agent}), suppose that $(A, B)$ is controllable, and $B$ is of full column rank. Given a constant $\alpha>0$, and a symmetric matrix $Q_2>0$ such that $(A, Q_2^{\frac{1}{2}})$ is observable, then there exists a unique positive-definite matrix $S_2$ satisfying the following modified ARE:
\begin{equation}\label{equ_ARE_like}
A^{\rm T} S_2 A - S_2 + Q_2 -\frac{\delta}{1+\alpha} A^{\rm T} S_2 B (B^{\rm T} S_2 B)^{-1} B^{\rm T} S_2 A =0,
\end{equation}
if and only if $\delta>\delta_c$, where $\delta_c \triangleq \inf_{\delta}\{0\leqslant\delta\leqslant1| S_2 =A^{\rm T} S_2 A + Q_2 -\frac{\delta}{1+\alpha} A^{\rm T} S_2 B (B^{\rm T} S_2 B)^{-1} B^{\rm T} S_2 A,$ $ S_2\geqslant 0\}$.
Furthermore, $\delta_c = 1-\frac{1}{\max_i|\lambda_i^u(A)|^2}$ when $B$ is square and invertible, $\delta_c = 1-\frac{1}{\prod_i|\lambda_i^u(A)|^2}$
when $B$ is of rank one. In general, $\delta_c$ can be determined by $\delta_c = {\rm arg}\min_{\delta}\Psi_{\delta}(Y,Z)>0$ subject to $0\leqslant Y\leqslant I$, where
$\Psi_{\delta}(Y,Z) = \left[
                       \begin{array}{ccc}
                         Y & \sqrt{\delta}(Y A^{\rm T} + Z B^{\rm T}) & \sqrt{1-\delta}Y A^{\rm T} \\
                         \sqrt{\delta}(A Y + B Z^{\rm T}) & Y & 0 \\
                         \sqrt{1-\delta}A Y & 0 & Y \\
                       \end{array}
                     \right]$.
\end{lemma}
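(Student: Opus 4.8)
The plan is to recognize $(\ref{equ_ARE_like})$ as the fixed-point equation of a monotone, concave Riccati-type operator and then to read its solvability threshold off that structure. Completing the square in the feedback, for every $S>0$ one has $A^{\rm T} S B(B^{\rm T} S B)^{-1}B^{\rm T} S A = A^{\rm T} S A-\min_{K}(A+BK)^{\rm T} S(A+BK)$ with minimizer $K^{\star}(S)\triangleq-(B^{\rm T} S B)^{-1}B^{\rm T} S A$, so writing $\theta\triangleq\delta/(1+\alpha)\in(0,1)$, equation $(\ref{equ_ARE_like})$ reads $S_2=\mathcal{R}_{\theta}(S_2)$ with $\mathcal{R}_{\theta}(S)\triangleq(1-\theta)(Q_2+A^{\rm T} S A)+\theta\big(Q_2+A^{\rm T} S A-A^{\rm T} S B(B^{\rm T} S B)^{-1}B^{\rm T} S A\big)$, a convex combination of the affine Lyapunov map and the ``cheap-control'' ($R\to0$) Riccati map; hence $\mathcal{R}_{\theta}$ is monotone nondecreasing and concave on the positive-definite cone. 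By the standard monotone-operator analysis of Riccati recursions, the iterates $S^{(j+1)}=\mathcal{R}_{\theta}(S^{(j)})$ from $S^{(1)}=Q_2>0$ are nondecreasing and converge to a positive-definite fixed point \emph{iff} they are bounded above; since monotonicity forces any $S_2\geqslant0$ solving $(\ref{equ_ARE_like})$ to dominate every iterate, existence of a nonnegative solution is equivalent to boundedness, i.e.\ to $\delta>\delta_c$ with $\delta_c$ the infimum in the statement, while $Q_2>0$ and observability of $(A,Q_2^{1/2})$ force the limit to be positive definite. Uniqueness among positive-definite solutions is the usual comparison argument (optimality of $K^{\star}(S_2)$ together with contractivity of the Lyapunov operator attached to any solution gives $S_2\leqslant\tilde S_2$, and symmetrically), and $\mathcal{R}_{\theta}$ nondecreasing in $\theta$ makes the solvability set the interval $(\delta_c,1]$.

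I would then evaluate $\delta_c$. If $B$ is square and invertible, $B(B^{\rm T} S B)^{-1}B^{\rm T}=S^{-1}$, the projection term collapses, and $(\ref{equ_ARE_like})$ degenerates to the Stein equation $S_2=(1-\theta)A^{\rm T} S_2 A+Q_2$, which has a positive-definite solution iff $(1-\theta)$ times the squared spectral radius of $A$ is below one; as the unstable modes dominate the spectral radius, this rearranges to $\delta_c=1-1/\max_i|\lambda_i^u(A)|^2$. If $B$ has rank one (a single input), I would bring $(A,B)$ to controllable companion form, where selecting $K^{\star}$ is equivalent to freely assigning the characteristic polynomial of $A+BK^{\star}$, and analyze the averaged positive operator $\Phi(P)=(1-\theta)A^{\rm T} P A+\theta(A+BK^{\star})^{\rm T} P(A+BK^{\star})$ on the (at most two-dimensional) invariant subspaces carrying the unstable eigenvalues: because a single input removes only one degree of freedom per step, the unstable block must be deflated jointly, so the smallest attainable mean-square growth is governed by the \emph{product} $\prod_i|\lambda_i^u(A)|^2$ rather than the maximum, giving $\delta_c=1-1/\prod_i|\lambda_i^u(A)|^2$. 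These two extremes match, respectively, the critical loss probabilities for mean-square stabilization over a lossy actuation link---the companion problem of this Riccati equation---which is a useful consistency check.

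For general $B$, I would characterize $\delta_c$ through the LMI $\Psi_{\delta}(Y,Z)>0$, $0\leqslant Y\leqslant I$. Taking the Schur complement of $\Psi_{\delta}$ with respect to its bottom-right block ${\rm diag}(Y,Y)$ and substituting $P=Y^{-1}$, $K=Z^{\rm T} Y^{-1}$, $A_c=A+BK$ turns feasibility into the existence of $P>0$ (the bound $P\geqslant I$ being harmless by scaling) and a gain $K$ with $P>(1-\delta)A^{\rm T} P A+\delta A_c^{\rm T} P A_c$, equivalently that the positive operator $P\mapsto(1-\delta)A^{\rm T} P A+\delta A_c^{\rm T} P A_c$ has spectral radius below one for some admissible $A_c$. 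Optimizing over $K$, its minimizer is exactly $K^{\star}$ from $\mathcal{R}_{\theta}$, so this threshold coincides with the one above: ``if'' is witnessed by $P$ proportional to the ARE solution $S_2$ with $K=K^{\star}(S_2)$, and ``only if'' by propagating a feasible pair $(P,K)$ through $\mathcal{R}_{\theta}$; hence $\delta_c=\inf\{\delta:\Psi_{\delta}\text{ is feasible}\}$.

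The genuine obstacle is the rank-one case: converting the heuristic ``one input deflates one unstable direction per step'' into a rigorous value for the spectral radius of $\Phi$ requires the companion-form reduction together with a matched pair of bounds---a determinant-type lower bound governed by $\prod_i|\lambda_i^u(A)|^{2}$ and an explicit gain attaining it. A secondary subtlety is making the existence part airtight although $A$ is not stabilizable by $K^{\star}$ alone: this is precisely where positivity of $Q_2$ and observability of $(A,Q_2^{1/2})$ are used, both to keep $B^{\rm T} S^{(j)}B$ invertible along the iteration and to rule out a merely positive-semidefinite limit; one must also check carefully that the ARE-based and LMI-based characterizations of $\delta_c$ are consistent, being attentive to the role of the weight $1+\alpha$ when matching them.
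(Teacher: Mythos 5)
Your plan is essentially the paper's own route: the paper gives no argument at all for this lemma, deferring entirely to the intermittent/lossy-network Riccati literature (Sinopoli et al., Schenato et al.), and your ingredients --- the monotone, concave modified-Riccati operator obtained by completing the square over $K$, existence iff the iteration from $Q_2$ stays bounded with $\delta_c$ as the threshold, the Stein-equation reduction when $B$ is invertible, the product-of-unstable-eigenvalues value for rank-one $B$, and the Schur-complement/LMI characterization of $\delta_c$ --- are exactly the steps of those cited proofs. Your closing caution about the weight $1+\alpha$ is well placed: the stated formulas for $\delta_c$ are the standard ones for an equation with coefficient $\delta$, whereas the MARE here carries $\delta/(1+\alpha)$, a mismatch the paper itself leaves unaddressed, so it is a defect of the lemma statement rather than of your argument.
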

\begin{proof}
The proof can be derived by following the similar lines as in \cite{Schenato07Proceedings}\cite{Sinopoli04TAC_kalman}, so it is omitted here.
\end{proof}

Based on Lemma \ref{lemma_ARE_like}, the design condition of the consensus protocol that is optimal with respect to an optimal performance index and
guarantees consensus is reported in the following theorem.
\begin{theorem}\label{theorem_optimality_protocol}
For the system in (\ref{equ_overall_system}), suppose that $B$ is of full column rank.
If the consensus gain $K_2$ in (\ref{eqn_conventional_protocol}) is designed as $K_2 = -(B^{\rm T} S_2 B+R_2)^{-1} B^{\rm T} S_2 A$,
then the system in (\ref{equ_overall_system}) can reach consensus, and the control input $U_k = c (\mathcal{L} \otimes K_2) X_k$ is optimal with respect to the performance index $J_u(X_0,U_k) = \sum_{k=0}^{\infty} \|X_k\|_{Q_u}^2 + \|U_k\|_{R_u}^2$, with
$Q_u = S_1\otimes Q_2 + \frac{c S_1 \mathcal{L}-\delta S_1}{1+\alpha}\otimes H$, with $H = A^{\rm T} S_2 B(B^{\rm T} S_2 B)^{-1}B^{\rm T} S_2 A$,
and $R_u =  R_1\otimes R_2$, where the parameters are designed as follows:
1) $Q_2>0$ and $(A, Q_2)$ is observable; 2) $S_2$ is a symmetric and positive definite solution to (\ref{equ_ARE_like}) with a given $\delta>0$; 3) $S_1 = W \mathcal{L}$, with $W$ being symmetric and invertible, and $W \mathcal{L}$ and $W \mathcal{L}^{2}$ being symmetric; 4) $R_1 = \frac{W(I_M -c \mathcal{L})}{c \alpha}$; 5) $R_2 = \alpha B^{\rm T} S_2 B$, where $\alpha>0$ is a constant; 6) $c$ is designed such that $\frac{\delta}{\sigma_{\min}(\mathcal{L})}\leqslant c \leqslant  \frac{1}{\sigma_{\max}(\mathcal{L})}$.
\end{theorem}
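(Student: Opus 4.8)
The plan is to transplant the proof of Theorem~\ref{thm_consensus_inverse_semi} almost verbatim, replacing the Lyapunov equation~(\ref{equ_Lyapunov_equation}) everywhere by the modified ARE~(\ref{equ_ARE_like}), and then to close the argument by invoking Lemma~\ref{lemma_inverse_optimality} for the overall system~(\ref{equ_overall_system}). First I would set $P=S_s\triangleq S_1\otimes S_2$, $Q=Q_u$, $R=R_u$, and form the ARE quantity $\Sigma$ for the pair $(I_M\otimes A,\,I_M\otimes B)$ exactly as in that proof. Expanding $\Sigma$ with the Kronecker identities and using $R_2=\alpha B^{\rm T}S_2B$ together with conditions 3)--4) (which, as in the semi-stable proof, also ensure $S_1+\alpha R_1$ is invertible), the cross term collapses to $S_1(S_1+\alpha R_1)^{-1}S_1\otimes H=\tfrac{cS_1\mathcal{L}}{1+\alpha}\otimes H$, i.e.\ the identity~(\ref{equ_inermidiate_S1}) already established there. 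Substituting~(\ref{equ_ARE_like}) in the form $A^{\rm T}S_2A-S_2=-Q_2+\tfrac{\delta}{1+\alpha}H$ for the remaining terms then gives $\Sigma=-\bigl(S_1\otimes Q_2+\tfrac{cS_1\mathcal{L}-\delta S_1}{1+\alpha}\otimes H\bigr)=-Q_u$, so~(\ref{equ_ARE}) holds.

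Next I would rewrite $U_k=c(\mathcal{L}\otimes K_2)X_k$ with $K_2=-(B^{\rm T}S_2B+R_2)^{-1}B^{\rm T}S_2A$, and use condition 5) together with~(\ref{equ_inermidiate_S1}) to put the overall gain in the form $K^*=-[(I_M\otimes B)^{\rm T}S_s(I_M\otimes B)+R_u]^{-1}(I_M\otimes B)^{\rm T}S_s(I_M\otimes A)$; part~2) of Lemma~\ref{lemma_inverse_optimality} then yields optimality of $U_k=K^*X_k$ with respect to $J_u$. It remains to verify the set-stability hypothesis~(\ref{equ_set_stability}), $\|X\|_{Q_u}\geqslant\gamma|X|_{\mathcal{N}}$ with $\mathcal{N}=\mathrm{Ker}(S_s)$. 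Since $W$ and $S_2$ are invertible, $\mathcal{N}=\mathrm{Ker}(\mathcal{L}\otimes I_n)$. Here the argument is actually \emph{shorter} than in the semi-stable case: condition 1) makes $Q_2>0$ strictly, so $\mathrm{Ker}(S_1\otimes Q_2)=\mathrm{Ker}(\mathcal{L}\otimes I_n)=\mathcal{N}$ directly and the contradiction argument used for $Q_s$ is not needed; moreover $\tfrac{cS_1\mathcal{L}-\delta S_1}{1+\alpha}\otimes H=\tfrac{1}{1+\alpha}W\mathcal{L}(c\mathcal{L}-\delta I_M)\otimes H$ does not enlarge the kernel because condition 6) gives $c\,\sigma_{\min}(\mathcal{L})\geqslant\delta$, hence $c\lambda-\delta\geqslant0$ on every nonzero eigendirection of $\mathcal{L}$. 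Thus, as with $Q_s$, $Q_u\geqslant0$ with $\mathrm{Ker}(Q_u)=\mathcal{N}$, so $\|X\|_{Q_u}\geqslant\sqrt{\sigma_{\min}(Q_u)}\,|X|_{\mathcal{N}}$, and part~1) of Lemma~\ref{lemma_inverse_optimality} makes the closed-loop system asymptotically stable with respect to $\mathcal{N}=\mathrm{Ker}(\mathcal{L}\otimes I_n)$; Theorem~\ref{thm_AS_Consensus} then gives consensus.

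The main obstacle, and the place where the unstable case genuinely differs from the semi-stable one, is the interplay between conditions 6) and~2): the new lower bound $c\geqslant\delta/\sigma_{\min}(\mathcal{L})$ is exactly what keeps $Q_u\geqslant0$, so I would have to check that the two bounds on $c$ are simultaneously satisfiable, i.e.\ that $\delta/\sigma_{\min}(\mathcal{L})\leqslant 1/\sigma_{\max}(\mathcal{L})$, which constrains $\delta$ against the graph spectrum and ties back to the threshold $\delta_c$ of Lemma~\ref{lemma_ARE_like}. I would also double-check that $R_u=R_1\otimes R_2>0$ (via $R_2=\alpha B^{\rm T}S_2B>0$ and $W(I_M-c\mathcal{L})>0$ under condition 6)) so that Lemma~\ref{lemma_inverse_optimality} really applies. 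Everything else is the routine Kronecker-algebra bookkeeping already carried out in the proof of Theorem~\ref{thm_consensus_inverse_semi}, and I expect the most delicate line to be lining up the $\tfrac{\delta}{1+\alpha}$ and $\tfrac{c}{1+\alpha}$ factors when substituting~(\ref{equ_ARE_like}) into $\Sigma$.
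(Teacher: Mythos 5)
Your proposal follows essentially the same route as the paper's own proof: the paper likewise reduces the ARE verification to the argument of Theorem~\ref{thm_consensus_inverse_semi} with (\ref{equ_ARE_like}) in place of (\ref{equ_Lyapunov_equation}), uses $Q_2>0$ and condition 6) (so that $cS_1\mathcal{L}-\delta S_1\geqslant 0$) to conclude $Q_u\geqslant 0$ with ${\rm Ker}(Q_u)=\mathcal{N}={\rm Ker}(\mathcal{L}\otimes I_n)$ without the contradiction argument, and then invokes Lemma~\ref{lemma_inverse_optimality} for optimality and set stability, hence consensus. Your additional observation that the bounds in condition 6) need $\delta/\sigma_{\min}(\mathcal{L})\leqslant 1/\sigma_{\max}(\mathcal{L})$ to be compatible is exactly the point the paper raises in the remark following the theorem, and it does not affect the proof since condition 6) is assumed.
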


\begin{proof}
The fact that $U_k = c (\mathcal{L} \otimes K_2) X_k$ is optimal with respect to the performance index $J_u(X_0,U_k)$ can be proved by following the
similar line of the proof in Theorem \ref{thm_consensus_inverse_semi}, by noticing that $S_2$ satisfies (\ref{equ_ARE_like}).

Next, it needs to be proved that $\|X_k\|_{Q_u}\geqslant \gamma_1 |X_k|_{\mathcal{N}}$,
for some constant $\gamma_1>0$ and $\mathcal{N} = {\rm Ker}(S_u)$ with $S_u = S_1 \otimes S_2$.
According to the design conditions 2) and 3), one gets that $S_1 = W \mathcal{L}$, $W>0$ and $S_2>0$.
As a result, it follows that $\mathcal{N} = {\rm Ker}(\mathcal{L}\otimes I_n)$.
On the other hand, in terms of the design condition 6) $\frac{\delta}{\sigma_{\min}(\mathcal{L})}\leqslant c$, it follows
that $c S_1 \mathcal{L}-\delta S_1\geqslant 0$. Therefore, $Q_u \geqslant 0$.
Furthermore, since $Q_2>0$ and $S_1 = W \mathcal{L}$, it can be seen that the null space of $Q_u$ is exactly $\mathcal{N}$.
As a result, one has $\|X_k\|_{Q_u} \geqslant \sigma_{\min}(Q_u)|X_k|_{\mathcal{N}}$.

Finally, by applying Lemma \ref{lemma_inverse_optimality}, the closed-loop system is asymptotically set-stable with respect to the set
$\mathcal{N}$, leading to the state consensus. The proof is completed.
\end{proof}

\begin{remark}
In comparison with the design conditions in Theorem \ref{thm_consensus_inverse_semi} for MASs with semi-stable subsystems, the design conditions 2), 3) and 6) are different for MASs with unstable subsystems.
This is due to the fact that, for the semi-stable subsystems, a Lyapunov equation in (\ref{equ_Lyapunov_equation}) can be established to design the consensus gain $K_2$, while for the unstable subsystems, only a modified ARE in (\ref{equ_ARE_like}) can be found to design the consensus gain.
This difference also results in a different optimal performance index.
\end{remark}

\begin{remark}
By comparing the design condition 6) in Theorems \ref{theorem_optimality_protocol} and \ref{thm_consensus_inverse_semi}, it is noted that the design condition for the coupling factor $c$ for the MASs with unstable subsystems is more stringent than that of semi-stable cases. In fact, for the MASs with unstable subsystems, in order to make such a $c$ exists, one requires that $\delta\leqslant \frac{\sigma_{\min}(\mathcal{L})}{\sigma_{\max}(\mathcal{L})}$. But according to Lemma \ref{lemma_ARE_like}, $\delta>\delta_c$ is a parameter determined by the system matrices $A$ and/or $B$, and $\frac{\sigma_{\min}(\mathcal{L})}{\sigma_{\max}(\mathcal{L})}$ is fixed parameter for the connected networks. As a result, there may exist unstable subsystems such that $\delta_c > \frac{\sigma_{\min}(\mathcal{L})}{\sigma_{\max}(\mathcal{L})}$. For such subsystems, there might not exist an optimal consensus protocol.
However, for the MASs with semi-stable subsystems, the coupling factor $c$ can always be chosen to satisfy the condition 6) in Theorem \ref{thm_consensus_inverse_semi}.
\end{remark}

\subsection{RHC-Based Consensus Strategy}
The design of the terminal set is similar as that of semi-stable cases, i.e., there exists a $\beta_u>0$ such that
$\mathcal{O}_{\beta_u}$ is forward invariant for the system in (\ref{equ_closed_loop_system1}), and $K^* X_k \in \mathcal{U}$, for all $X_k \in \mathcal{O}_{\beta_u}$.
Hence, the core of the RHC-based consensus strategy is to solve the following constrained optimization problem:
\begin{problem}\label{problem_CRHC}
$\bar{U}_k^* = {\rm arg} \min J_u(X_k,\bar{U}_k), {\text subject \; to}$
\begin{align*}
X_{k+l+1|k} = &(I_M \otimes A) X_{k+l|k} + (I_M \otimes B) U_{k+l|k},\\
& U_{k+l|k}\in\mathcal{U}, X_{k+N|k}\in\mathcal{O}_{\beta_u},
\end{align*}

where $l=0,\cdots, N-1$, $X_{k|k} = X_k$, and $\bar{U}_k = {\rm col}(U_{k|k},\cdots, U_{k+N-1|k})$.
\end{problem}
The cost function is defined as
\begin{align*}
J_u(X_k,\bar{U}_k) = \sum_{i=0}^{N-1}\|X_{k+i|k}\|_{Q_u}^2 + \|U_{k+i|k}\|^2_{R_u} + \|X_{k+N|k}\|^2_{S_u},
\end{align*}
where $Q_u$, $R_u$ and $S_u$ the parameters in Theorem \ref{theorem_optimality_protocol}, respectively.

Based on Problem \ref{problem_CRHC}, the centralized RHC-based consensus strategy is: At each time instant $k$, Problem \ref{problem_CRHC} is
solved to generate $\bar{U}_k^*$, and the control input $U_k$ is taken as $U_{k+1|k}^*$.
Analogously, the cost function can be distributively assigned to each agent $i$ under certain condition.
\begin{lemma}\label{lemma_distribution}
In the cost function $J_u(X_k,\bar{U}_k)$, if $W = \mu I_M$, and $\mathcal{L} = \mathcal{L}^{\rm T}$, where the scalar $\mu >0$, then
$J_u(X_k,\bar{U}_k)$ can be distributively assigned to each agent $i$ by the following sub-cost function as
\begin{align*}
J_u^i(x^i_k,\bar{u}^i_k) =& \sum_{l=0}^{N-1} \sum_{j\in \mathcal{N}_i}a_{ij}\mu[(x_{k+l|k}^i)^{\rm T} (Q_2-\frac{H}{1+\alpha}) (x_{k+l|k}^i\\
                          & -x_{k+l|k}^j)] + \frac{\mu}{c\alpha} \|u_{k+l|k}^i\|_{R_2}^2\\
                          & - \frac{\mu}{\alpha} [(u_{k+l|k}^i)^{\rm T} R_2 (u_{k+l|k}^i-u_{k+l|k}^j)]\\
                          & + \frac{c \mu }{1+\alpha}\|\sum_{j\in \mathcal{N}_i}a_{ij}(x_{k+l|k}^i-x_{k+l|k}^j)\|_{H}^2 \\
                          & + \mu \sum_{j\in \mathcal{N}_i}a_{ij}[(x_{k+N|k}^i)^{\rm T} S_2 (x_{k+N|k}^i-x_{k+N|k}^j)].
\end{align*}
\end{lemma}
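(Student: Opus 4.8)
The plan is to mirror the proof of Lemma~\ref{lemma_distribution_Semi}: expand the centralized cost $J_u(X_k,\bar{U}_k)$ block by block and, for every quadratic form whose weight is a Kronecker product with a Laplacian-built left factor, rewrite it as a sum over agents of local quadratic forms in the neighbour differences $x^i-x^j$ and $u^i-u^j$. The only genuinely new feature relative to the semi-stable case is the extra term that is quadratic in $\mathcal{L}$ and appears inside $Q_u$ because of the $-\delta S_1$ correction, so the extra work is just isolating and handling that term.

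First I would record the consequences of the hypotheses. With $W=\mu I_M$ and $\mathcal{L}=\mathcal{L}^{\rm T}$ one has $S_1=\mu\mathcal{L}$, $S_1\mathcal{L}=\mu\mathcal{L}^{2}$ (symmetric), $R_1=\tfrac{\mu}{c\alpha}I_M-\tfrac{\mu}{\alpha}\mathcal{L}$, and $R_2=\alpha B^{\rm T}S_2B$, so the three weights of Theorem~\ref{theorem_optimality_protocol} decompose as
\[
Q_u=\mu\,\mathcal{L}\otimes\Big(Q_2-\tfrac{\delta}{1+\alpha}H\Big)+\tfrac{c\mu}{1+\alpha}\,\mathcal{L}^{2}\otimes H,\qquad
R_u=\tfrac{\mu}{c\alpha}\,I_M\otimes R_2-\tfrac{\mu}{\alpha}\,\mathcal{L}\otimes R_2,\qquad
S_u=\mu\,\mathcal{L}\otimes S_2 .
\]
Then I would invoke two elementary identities. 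For any symmetric $M$, using $\sum_{j}l_{ij}x^{j}=\sum_{j\in\mathcal{N}_i}a_{ij}(x^{i}-x^{j})$,
\[
X^{\rm T}(\mathcal{L}\otimes M)X=\sum_{i=1}^{M}\sum_{j\in\mathcal{N}_i}a_{ij}\,(x^{i})^{\rm T}M(x^{i}-x^{j}),
\]
and, since $\mathcal{L}=\mathcal{L}^{\rm T}$ gives $\mathcal{L}^{2}\otimes H=(\mathcal{L}\otimes I_n)^{\rm T}(I_M\otimes H)(\mathcal{L}\otimes I_n)$,
\[
X^{\rm T}(\mathcal{L}^{2}\otimes H)X=\sum_{i=1}^{M}\Big\|\sum_{j\in\mathcal{N}_i}a_{ij}(x^{i}-x^{j})\Big\|_{H}^{2},
\]
while the diagonal blocks are trivial, $X^{\rm T}(I_M\otimes M)X=\sum_{i}\|x^{i}\|_{M}^{2}$.

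Applying these term by term with $X=X_{k+l|k}$, $U=U_{k+l|k}$: the first piece of $Q_u$ yields the edge-summed $\big(Q_2-\tfrac{\delta}{1+\alpha}H\big)$ term, the $\mathcal{L}^{2}$ piece yields the $\big\|\sum a_{ij}(x^i-x^j)\big\|_H^2$ term; $R_u$ yields $\tfrac{\mu}{c\alpha}\|u^i\|_{R_2}^2$ from its diagonal part and $-\tfrac{\mu}{\alpha}(u^i)^{\rm T}R_2(u^i-u^j)$ from its Laplacian part; and $S_u$ yields the terminal edge-summed $S_2$ term. Summing over $l=0,\dots,N-1$, adding the terminal term, and interchanging the order of the sums over $i$ and $l$ regroups everything as $\sum_{i=1}^{M}J_u^{i}(x^{i}_k,\bar{u}^{i}_k)$, which is exactly the asserted decomposition.

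The main obstacle I expect is purely organizational rather than analytic: one must make sure the $\mathcal{L}^{2}\otimes H$ contribution is cleanly separated from $\mathcal{L}\otimes Q_2$ inside $Q_u$ so that the $H$-coefficients in the local ``$Q_2$'' term and in the squared-sum term come out exactly in the stated form, and one must check that the cross terms produced by writing $(x^{i})^{\rm T}M(x^{i}-x^{j})$ (which is not symmetric under $i\leftrightarrow j$) reassemble correctly once summed over all agents — this is precisely where $\mathcal{L}=\mathcal{L}^{\rm T}$ enters, exactly as in Lemma~\ref{lemma_distribution_Semi}. No new ingredient beyond those of the semi-stable proof is required.
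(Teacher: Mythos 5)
Your proposal is correct and takes essentially the same route as the paper, which omits the proof and simply refers back to the argument of Lemma \ref{lemma_distribution_Semi}: substitute $W=\mu I_M$ and $\mathcal{L}=\mathcal{L}^{\rm T}$ into $Q_u$, $R_u$, $S_u$, and apply the edge-sum identity $X^{\rm T}(\mathcal{L}\otimes M)X=\sum_{i}\sum_{j\in\mathcal{N}_i}a_{ij}(x^i)^{\rm T}M(x^i-x^j)$ together with $\mathcal{L}^{2}\otimes H=(\mathcal{L}\otimes I_n)^{\rm T}(I_M\otimes H)(\mathcal{L}\otimes I_n)$ term by term. Note only that your bookkeeping gives the local weight $Q_2-\tfrac{\delta}{1+\alpha}H$, which is what $Q_u$ in Theorem \ref{theorem_optimality_protocol} actually requires, so the factor $\delta$ missing from the lemma's printed $Q_2-\tfrac{H}{1+\alpha}$ is evidently a typo in the statement rather than a flaw in your derivation.
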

\begin{proof}
The proof can be obtained by following the similar line as that of Lemma \ref{lemma_distribution_Semi}, so it is omitted here.
\end{proof}

Likely, the terminal constraint can be equivalently imposed to each agent $i$ as $\sum_{j\in \mathcal{N}_i}a_{ij}\mu(x_{k+N|k}^i)^{\rm T}$
$ S_2 (x_{k+N|k}^i-x_{k+N|k}^j) \leqslant \frac{\beta_u^2}{M}$. And the optimization problem associated with each agent $i$ can be formulated as
\begin{problem}\label{problem_distribution}
$\bar{u}_k^{i*} = {\rm arg} \min J_u^i(x_k^i,\bar{u}_k^i), {\text subject \; to}$
\begin{align*}
& x_{k+l+1|k}^i =A x_{k+l|k}^i + B u_{k+l|k}^i,\\
& u_{k+l|k}^i \in\mathcal{U}^i, l=0\cdots, N-1, \\
& \sum_{j\in \mathcal{N}_i}a_{ij}\mu(x_{k+N|k}^i)^{\rm T} S_2 (x_{k+N|k}^i-x_{k+N|k}^j) \leqslant \frac{\beta_u^2}{M},
\end{align*}
where $\bar{u}_{k}^i = {\rm col}(u_{k|k}^i, \cdots, u_{k+N-1|k}^i)$, and $x_k^i = x_{k|k}^i$.
\end{problem}

The distributed RHC-based consensus strategy is the same as that for MASs with semi-stable subsystems by replacing the solution to Problem \ref{problem_semi_distribution} with that to Problem \ref{problem_distribution}.

\subsection{Feasibility and Consensus Property}
The feasibility result for the MASs of unstable subsystems is similar as that for semi-stable case, which is presented in the
following corollary.
\begin{corollary}\label{corollary_feasibility}
For the system in (\ref{equ_overall_system}), if Problem \ref{problem_CRHC} has a solution at time instant $k$, then it has a solution
at time instant $k+1$, for all $k\geqslant 0$.
\end{corollary}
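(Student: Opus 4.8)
The plan is to mimic the recursive-feasibility argument already used in the proof of Theorem \ref{thm_semi_feasibility}, with the terminal ingredients replaced by their unstable-case counterparts. First I would assume that at time instant $k$ Problem \ref{problem_CRHC} admits an optimal solution $\bar{U}_k^* = {\rm col}(U_{k|k}^*, \cdots, U_{k+N-1|k}^*)$ with associated optimal state sequence $X_{k+1|k}^*, \cdots, X_{k+N|k}^*$, so that in particular the constraints $U_{k+l|k}^* \in \mathcal{U}$ for $l=0,\cdots,N-1$ and the terminal constraint $X_{k+N|k}^* \in \mathcal{O}_{\beta_u}$ are satisfied. The candidate solution at time $k+1$ is obtained by the usual ``shift-and-append'' construction: $\bar{U}_{k+1}^f \triangleq {\rm col}(U_{k+1|k}^*, \cdots, U_{k+N-1|k}^*, K^* X_{k+N|k}^*)$, where $K^*$ is the optimal consensus gain from Theorem \ref{theorem_optimality_protocol}. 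Since the measured state $X_{k+1}$ equals $X_{k+1|k}^*$ under the closed-loop evolution, the predicted states generated by $\bar{U}_{k+1}^f$ satisfy $X^f_{k+l+1|k+1} = X^*_{k+l|k}$ for $l=1,\cdots,N$.

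Next I would verify that $\bar{U}_{k+1}^f$ is feasible for Problem \ref{problem_CRHC} at time $k+1$. The first $N-1$ control moves $U_{k+1|k}^*, \cdots, U_{k+N-1|k}^*$ inherit the constraint $U_{k+l|k}^* \in \mathcal{U}$ directly from the feasibility at time $k$. For the appended last move and the new terminal state, I would invoke the forward-invariance property of the terminal set established analogously to Lemma \ref{lemma_terminal_set}: because $\mathcal{O}_{\beta_u}$ is forward invariant for the closed-loop system in (\ref{equ_closed_loop_system1}) and satisfies $K^* X_k \in \mathcal{U}$ for all $X_k \in \mathcal{O}_{\beta_u}$, the fact that $X_{k+N|k}^* \in \mathcal{O}_{\beta_u}$ yields both $U_{k+N|k+1}^f = K^* X_{k+N|k}^* \in \mathcal{U}$ and $X^f_{k+N+1|k+1} = [I_M \otimes A + (I_M \otimes B)K^*] X_{k+N|k}^* \in \mathcal{O}_{\beta_u}$. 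Hence all input constraints and the terminal constraint at time $k+1$ are met, so Problem \ref{problem_CRHC} has a (feasible, hence optimal) solution at time $k+1$; induction on $k$ completes the argument.

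The only point that requires the unstable-case machinery rather than a verbatim copy of Theorem \ref{thm_semi_feasibility} is the existence and forward invariance of $\mathcal{O}_{\beta_u}$; this is precisely the content flagged at the start of Section \ref{Sec_General_linear}'s RHC subsection, and it rests on the modified ARE (\ref{equ_ARE_like}) rather than the Lyapunov equation (\ref{equ_Lyapunov_equation}). I do not expect a genuine obstacle here: once the terminal set is in hand with its invariance property, the feasibility recursion is structural and identical in form to the semi-stable case, so the proof can legitimately be stated as ``by following the same lines as the proof of Theorem \ref{thm_semi_feasibility},'' with the substitution $\beta_s \mapsto \beta_u$, $\mathcal{O}_{\beta_s} \mapsto \mathcal{O}_{\beta_u}$ throughout.
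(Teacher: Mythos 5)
Your proposal is correct and follows exactly the route the paper intends: the paper proves the corollary by the same shift-and-append construction used in Theorem \ref{thm_semi_feasibility}, relying on the forward invariance of the terminal set $\mathcal{O}_{\beta_u}$ and the property $K^*X\in\mathcal{U}$ on it. Nothing essential is missing from your argument.
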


Due to the unstable modes of the subsystems, the closed-loop system under the designed consensus protocol can reach consensus,
rather than convergent consensus as in the case of semi-stable subsystems.
\begin{theorem}\label{thm_RHC_consensus}
For the system in (\ref{equ_overall_system}), suppose that the designed conditions in Theorem \ref{theorem_optimality_protocol} hold.
Then under the designed RHC-based consensus protocol, the control input constraints are fulfilled and the closed-loop system reaches consensus.
\end{theorem}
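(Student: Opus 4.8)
The plan is to reuse the three-stage argument from the proof of Theorem~\ref{thm_semi_RHC_consensus}, now built on the modified ARE~(\ref{equ_ARE_like}) and the cost matrices $Q_u,R_u,S_u$ of Theorem~\ref{theorem_optimality_protocol}, and then to isolate the one place where semi-stability of $A$ was previously used, so as to explain why here only conventional---not \emph{convergent}---consensus can be claimed. The input-constraint part is immediate: by Corollary~\ref{corollary_feasibility}, feasibility of Problem~\ref{problem_CRHC} at time $k$ implies feasibility at $k+1$, so starting from any admissible initial state Problem~\ref{problem_CRHC} is solvable for all $k\geqslant 0$, and the applied input $U_k=U_{k|k}^*$ satisfies $U_k\in\mathcal{U}$, which is exactly the per-agent constraint~(\ref{equ_constraint}).

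Next I would show the closed-loop state reaches the terminal set $\mathcal{O}_{\beta_u}$ in finitely many steps. Let $J_u^*(X_k)$ be the optimal value of Problem~\ref{problem_CRHC} at $X_k$. Using the shifted feasible sequence $\bar U^f_{k+1}={\rm col}(U_{k+1|k}^*,\ldots,U_{k+N|k}^*,K^*X_{k+N|k}^*)$ as in Theorem~\ref{thm_semi_feasibility} and the sub-optimality of $\bar U^*_{k+1}$, a direct expansion gives
\begin{equation*}
J_u^*(X_{k+1})-J_u^*(X_k)\leqslant -\|X_k\|_{Q_u}^2+\|X_{k+N|k}^*\|_{\Delta}^2,
\end{equation*}
with $\Delta=Q_u+(K^*)^{\rm T}R_uK^*-S_u+[(I_M\otimes A)+(I_M\otimes B)K^*]^{\rm T}S_u[(I_M\otimes A)+(I_M\otimes B)K^*]$. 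The crux, and the step I expect to be the main obstacle, is to verify $\Delta=0$: this is the closed-loop Lyapunov identity equivalent to the ARE~(\ref{equ_ARE}), which by the proof of Theorem~\ref{theorem_optimality_protocol} is solved by $S_u=S_1\otimes S_2$ together with $Q_u,R_u$---so here one must invoke that $S_2$ solves the \emph{modified} ARE~(\ref{equ_ARE_like}), the Kronecker manipulations, and design condition~6) (in particular $cS_1\mathcal{L}-\delta S_1\geqslant 0$), rather than the Lyapunov equation~(\ref{equ_Lyapunov_equation}) available in the semi-stable case. Granting $\Delta=0$ we get $J_u^*(X_{k+1})-J_u^*(X_k)\leqslant -\|X_k\|_{Q_u}^2$, and since ${\rm Ker}(Q_u)=\mathcal{N}={\rm Ker}(\mathcal{L}\otimes I_n)$ with $\sigma_{\min}(Q_u)>0$, a trajectory that never enters $\mathcal{O}_{\beta_u}$ would keep $\|X_k\|_{Q_u}^2\geqslant\sigma_{\min}(Q_u)\epsilon$ for some $\epsilon>0$, forcing $J_u^*\to-\infty$ and contradicting $J_u^*\geqslant 0$; hence $X_{k_1}\in\mathcal{O}_{\beta_u}$ for some finite $k_1$, and by the forward-invariance argument of Lemma~\ref{lemma_level_set} it remains there.

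Finally I would run the dynamic-programming argument as in the semi-stable proof: once $X_{k_1}\in\mathcal{O}_{\beta_u}$ all constraints of Problem~\ref{problem_CRHC} are inactive, and since by Theorem~\ref{theorem_optimality_protocol} the feedback $U_k=K^*X_k$ is optimal for the infinite-horizon index $J_u$ with optimal value $\|X_k\|_{S_u}^2$, the terminal cost $\|X_{k+N|k}\|_{S_u}^2$ equals the optimal cost-to-go from $X_{k+N|k}$; by the optimality principle the finite-horizon optimum is then attained by $\bar U_k^*={\rm col}(K^*X_{k|k},\ldots,K^*X_{k+N-1|k})$, so the applied input is $U_k=K^*X_k$ for all $k\geqslant k_1$. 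The closed loop is then~(\ref{equ_closed_loop_system1}), which by Theorem~\ref{theorem_optimality_protocol} (through Lemma~\ref{lemma_inverse_optimality}) is asymptotically stable with respect to $\mathcal{N}={\rm Ker}(S_u)={\rm Ker}(\mathcal{L}\otimes I_n)$, and Theorem~\ref{thm_AS_Consensus} delivers consensus. To see that convergent consensus need not hold, transform~(\ref{equ_closed_loop_system1}) by the Jordan form of $\mathcal{L}$ as in~(\ref{equ_semi_system_transformed}): the block acting on the kernel $\mathcal{N}$ is governed by $A$ alone, which is now unstable, so $\|X_k\|$ may be unbounded even though all pairwise disagreements vanish---hence only conventional consensus is asserted, and the proof is complete.
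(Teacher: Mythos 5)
Your proposal is correct and follows essentially the paper's own route: the paper proves this theorem by invoking verbatim the first two parts of the proof of Theorem~\ref{thm_semi_RHC_consensus} (finite-time entry into $\mathcal{O}_{\beta_u}$ via the cost-decrease argument with $\Delta=0$ from the modified ARE, then the dynamic-programming step showing $U_k=K^*X_k$ inside the terminal set and set stability with respect to $\mathcal{N}$), which is exactly what you spell out with $Q_u,R_u,S_u$. Your closing remark on why only conventional (not convergent) consensus can be claimed is a harmless addition consistent with the paper's discussion, though not part of its proof.
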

\begin{proof}
The proof can be obtained by using the first and second part as the proof of Theorem \ref{thm_semi_RHC_consensus}.
\end{proof}

\section{Discussions on Design Conditions}\label{Sec_discussions}
In this section, discussions and insights are provided for the parameter design in the optimal consensus protocols (i.e., Theorem \ref{thm_consensus_inverse_semi} and \ref{theorem_optimality_protocol}) and RHC-based consensus strategies.
\subsection{Constraints for Cost Functions}
{\em Couplings in cost functions}: Unlike the conventional LQR problem and RHC strategy, the design of parameters in the cost functions, i.e.,
$Q_s, Q_u$, $R_s, R_u$ and $S_s, S_u$ has more constraints. In particular, the cost functions
are coupled with the network topologies (i.e., $S_1 = W \mathcal{L}$, and $R_1 = \frac{W(I_M -c \mathcal{L})}{c \alpha}$) and system matrices (i.e., $H=A^{\rm T} S_2 B(B^{\rm T} S_2 B)^{-1}B^{\rm T} S_2 A$ and $R_2 = \alpha B^{\rm T}S_2 B$).
This is due to the fact that the optimality and consensus are required simultaneously.

{\em Conditions for $Q_2$ and $S_2$}: The design constraints for $Q_2$ and $S_2$ come from the unstable eigenvalues of system matrix $A$.
For semi-stable subsystems, $Q_2$ needs to satisfy the condition that makes $S_2>0$ be a solution to the Lyapunov like equation (\ref{equ_Lyapunov_equation}), while for the unstable subsystems, $Q_2$ needs to satisfy more stringent condition that renders $S_2$ to be a solution to the modified ARE in (\ref{equ_ARE_like}). In fact, if the subsystem is stable, it only is required that $Q_2>0$ and $S_2>0$.

{\em Conditions for $c$}: For semi-stable subsystems, $c$ is required to satisfy $0< c\leqslant\frac{1}{\sigma_{\max}(\mathcal{L})}$, ensuring $R_1\geqslant 0$. But for unstable subsystems, $c$ needs to satisfy $\frac{\delta}{\sigma_{\min}(\mathcal{L})}\leqslant c\leqslant \frac{1}{\sigma_{\max}(\mathcal{L})}$, which is a coupled constraint from the system matrices and the network topology, and is more stringent.

In conclusion, the unstable modes of subsystems strengthen design constraints for $Q_2$, $S_2$ and $c$.

\subsection{Constraints for Network Topology}
The constraints in network topology are imposed by $W\mathcal{L}$ and $W\mathcal{L}^2$ being symmetric, where $W>0$ is symmetric (i.e., condition 3) in Theorem \ref{thm_consensus_inverse_semi} and \ref{theorem_optimality_protocol}).
This constraint arises from the symmetry requirement of the cost function in optimal control. The following lemma simplifies the constraint in $\mathcal{L}$.
\begin{lemma}\label{lemma_symmetric}
If $W\mathcal{L}$ and $W>0$ are symmetric, then $W\mathcal{L}^2$ is also symmetric.
\end{lemma}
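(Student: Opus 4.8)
The plan is to exploit the fact that symmetry of a product of matrices can be rephrased as the product equalling its own transpose, and then to use the hypothesis $W\mathcal{L} = (W\mathcal{L})^{\rm T} = \mathcal{L}^{\rm T} W$ together with symmetry of $W$. The key observation is that $W\mathcal{L}^2 = (W\mathcal{L})\mathcal{L}$, and we already know $W\mathcal{L}$ is symmetric; the only thing standing in the way is that $\mathcal{L}$ itself need not be symmetric, so I cannot simply multiply symmetric matrices and conclude. Instead I would transpose: $(W\mathcal{L}^2)^{\rm T} = (\mathcal{L}^2)^{\rm T} W = (\mathcal{L}^{\rm T})^2 W$.

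First I would record the hypothesis in the usable form $\mathcal{L}^{\rm T} W = W\mathcal{L}$, which follows immediately from $(W\mathcal{L})^{\rm T} = \mathcal{L}^{\rm T} W^{\rm T} = \mathcal{L}^{\rm T} W$ and the assumed symmetry of $W\mathcal{L}$. Then I would compute
\begin{align*}
(W\mathcal{L}^2)^{\rm T} &= (\mathcal{L}^{\rm T})^2 W = \mathcal{L}^{\rm T}(\mathcal{L}^{\rm T} W) = \mathcal{L}^{\rm T}(W\mathcal{L})\\
&= (\mathcal{L}^{\rm T} W)\mathcal{L} = (W\mathcal{L})\mathcal{L} = W\mathcal{L}^2,
\end{align*}
where the first equality in the second line re-applies $\mathcal{L}^{\rm T} W = W\mathcal{L}$ a second time. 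This shows $W\mathcal{L}^2$ equals its transpose, i.e.\ it is symmetric, which is the claim.

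There is essentially no obstacle here beyond being careful about the order of factors, since $W$ and $\mathcal{L}$ do not commute individually — only the combination $W\mathcal{L}$ behaves well. The mild subtlety worth flagging is that the hypothesis $\mathcal{L}^{\rm T} W = W\mathcal{L}$ is applied \emph{twice}, once to move $W$ past the first $\mathcal{L}^{\rm T}$ and once more to move it past the second; positive-definiteness of $W$ is not actually needed for this computation (only symmetry), though it is harmless to keep it. I would therefore present the proof as the short two-step transposition argument above, with the preliminary remark that the statement means precisely that the condition ``$W\mathcal{L}$ and $W\mathcal{L}^2$ symmetric'' appearing in condition~3) of Theorems~\ref{thm_consensus_inverse_semi} and \ref{theorem_optimality_protocol} reduces to the single condition ``$W\mathcal{L}$ symmetric.''
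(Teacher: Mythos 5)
Your proof is correct and is essentially identical to the paper's argument: both transpose $W\mathcal{L}^2$ and apply the identity $\mathcal{L}^{\rm T}W = W\mathcal{L}$ (from symmetry of $W\mathcal{L}$ and $W$) twice to restore the original product. The observation that positive-definiteness of $W$ is not needed, only its symmetry, is accurate but does not change the argument.
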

\begin{proof}
$(W\mathcal{L}^2)^{\rm T} = \mathcal{L}^{\rm T}\mathcal{L}^{\rm T} W^{\rm T} = \mathcal{L}^{\rm T}  W \mathcal{L} =  W \mathcal{L} \mathcal{L} =
W \mathcal{L}^2$, where the fact that $W \mathcal{L}$ and $W$ are symmetric is used.
\end{proof}

The network graphs that satisfy the condition that $W\mathcal{L}$ and $W>0$ are symmetric can be found in the following classes \cite{Hengster14TAC_consensus_optimal}: 1) undirected graphs, 2) detailed balanced graphs and 3) diagraphs with simple Laplacian.

\section{Simulation Studies}\label{Sec_simulation}
In this section, two examples on MASs with semi-stable and unstable subsystems are given to verify the proposed theoretical results.
\subsection{Example: Semi-stable Case}
Consider an MAS with semi-stable subsystems studied in \cite{Hui09IJC_semistable}. By discretizing it with the period $T=0.1 s$, the system parameters are as follows:
$A = \left[
       \begin{array}{ccccc}
         0.8 & 0.1 & 0.1 & 0 & 0 \\
         0 & 0.9 & 0 & 0.1 & 0 \\
         0.1 & 0.1 & 0.6 & 0.1 & 0.1 \\
         0 & 0.1 & 0.1 & 0.8 & 0 \\
         0.1 & 0.1 & 0 & 0 & 0.8 \\
       \end{array}
     \right]
$, $B =\left[
         \begin{array}{cc}
           -0.1 & 0.1 \\
           0.1 & -0.2 \\
           0 & -0.3 \\
           0.08 & 0.1 \\
           0.2 & 0.08 \\
         \end{array}
       \right]$.
The control input for each agent $i$ is required to satisfy  the constraints: $-0.3 \leqslant u_k^i(1)\leqslant 0.3$, and $-0.3 \leqslant u_k^i(2)\leqslant 0.3$.
The MAS under consideration consists of $5$ agents. The communication network contains a spanning tree, and its Laplacian matrix is figured out as
$\mathcal{L} = \left[
                 \begin{array}{ccccc}
                   2 & -1 & 0 & -1 & 0 \\
                   -1 & 2 & -1 & 0 & 0 \\
                   0 & -1 & 2 & 0 & -1 \\
                   -1 & 0 & 0 & 2 & -1 \\
                   0 & 0 & -1 & -1 & 2 \\
                 \end{array}
               \right]$.

The parameters are designed as follows:
$Q_2 = \left[
                                                 \begin{array}{ccccc}
                                                   1 & 0 & 0 & 0 & -1 \\
                                                   0 & 1 & 0 & 0 & -1 \\
                                                   0 & 0 & 1 & 0 & -1 \\
                                                   0 & 0 & 0 & 1 & -1 \\
                                                   -1 & -1 & -1 & -1 & 4 \\
                                                 \end{array}
                                               \right]
$, \\
$S_2 = \left[
            \begin{array}{ccccc}
2.551 &	-0.447 &	0.119 &	-0.813 &	-1.069\\
-0.447 &	4.028 &	0.227 &	1.356 &	-2.664\\
0.119& 	0.227&	1.799 &	0.740 &	-2.431\\
-0.813&	1.356 &	0.740 &	3.884 &	-3.689 \\
-1.069	& -2.664	& -2.431 & -3.689 &	10.081\\
            \end{array}
          \right]$,
$\alpha = 10$, $c = 10$ and $W = 0.5 I_5$. Note these designed parameters satisfies all the design conditions in Theorem \ref{thm_consensus_inverse_semi}.
In the RHC-based consensus strategy, the prediction horizon is taken as $N = 9$. By utilizing the MATLAB package, the simulation results are reported
in the following figures.

\begin{figure}[!hbt] \centering
\includegraphics[width=0.5\textwidth]{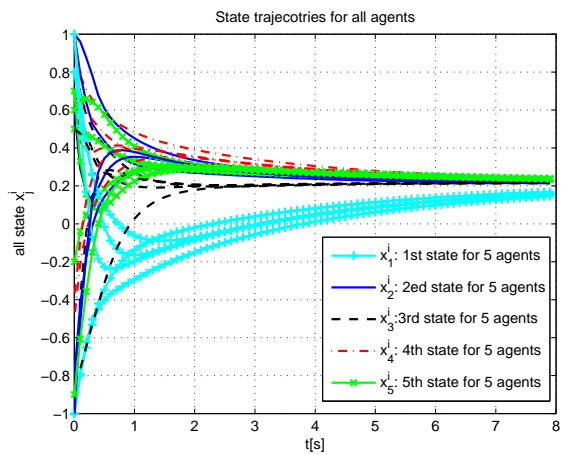}
\caption{State trajectories for all the 5 agents.}\label{fig_state_semi}
\end{figure}

\begin{figure}[!hbt] \centering
\includegraphics[width=0.5\textwidth]{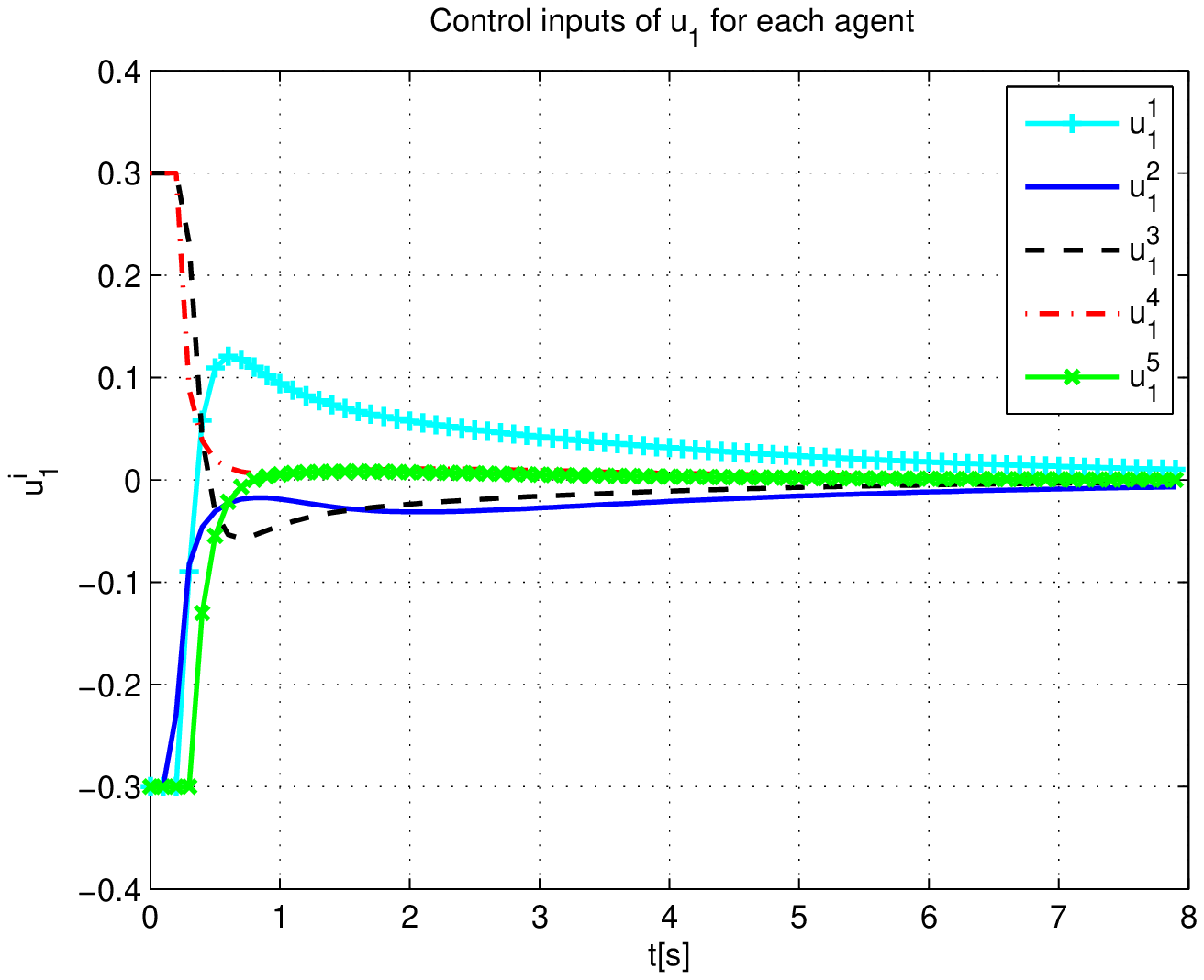}
\caption{The first control inputs for all the 5 agents.}\label{fig_control1_semi}
\end{figure}

\begin{figure}[!h] \centering
\includegraphics[width=0.5\textwidth]{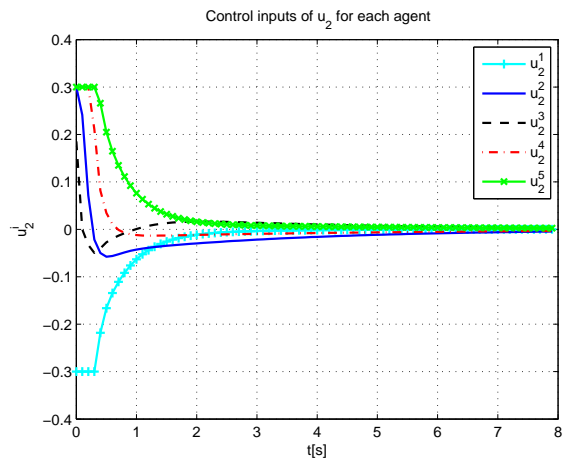}
\caption{The second control inputs for all the 5 agents.}\label{fig_control2_semi}
\end{figure}

From Fig. \ref{fig_state_semi}, it can be seen that the first states for all the 5 agents
converge to the same value, and this is also true for the other states.
This implies that the closed-loop system reaches consensus under the designed RHC-based consensus strategy.
In addition, all the states for the 5 agents is finally bounded, verifying that the closed-loop system reaches
convergent consensus as proved in Theorem \ref{thm_semi_RHC_consensus}.
The control inputs for the 5 agents are shown in Figs. \ref{fig_control1_semi} and \ref{fig_control2_semi}.
it can be seen that the control input constraints are satisfied, indicating that the proposed RHC-based consensus protocol
can meet the pre-scribed control input constraints.

\subsection{Example: Unstable Case}
Consider an MAS with $5$ agents with the dynamics being unstable \cite{Hengster13Auto_synchronization}.
The system matrices for each subsystem are as follows:
$A = \left[
       \begin{array}{ccc}
         0 & 1 & 0 \\
         0 & 0 & 1 \\
         -0.2 & 0.2 & 1.1 \\
       \end{array}
     \right]
$, and $B = \left[
          \begin{array}{c}
            0 \\
            0 \\
            1 \\
          \end{array}
        \right]$.
Note that $A$ is unstable and $(A, B)$ is controllable.
The control input for each agent is required to satisfy the constraint $-1\leqslant u^i\leqslant 1$ for all $i$.
The communication network contains a spanning tree, and its Laplacian matrix is obtained as follows:
$\mathcal{L} = \left[
                 \begin{array}{ccccc}
                   4 & -1 & -1 & -1 & -1 \\
                   -1 & 4 & -1 & -1 & -1 \\
                   -1 & -1 & 4 &-1 & -1 \\
                   -1 & -1 & -1 & 4 & -1 \\
                  -1 & -1 & -1 & -1 & 4\\
                 \end{array}
               \right]$.
The parameters are designed as follows: $Q_2 = \left[
\begin{array}{ccc}
3.990 & 1.027 & 3.069\\
1.027 &	2.833 &	1.426\\
3.069 &	1.426&	3.949\\
\end{array}
\right]$, $\delta = 0.1634$. According to modified ARE in (\ref{equ_ARE_like}),
$S_2 = \left[
 \begin{array}{ccc}
 4 & 1 & 3 \\
 1 & 6 & 2 \\
 3 & 2 & 10 \\
 \end{array}
 \right]$. $\alpha = 0.0274$, $W = 0.5 I_5$ and $c = 0.2$.
Note that $\frac{\delta}{\sigma_{\min}(\mathcal{L})} = 0.0327$ and
$\frac{1}{\sigma_{\max}(\mathcal{L})} = 0.2$.
Therefore, all the design conditions in Theorem \ref{theorem_optimality_protocol} are satisfied.
Under the designed RHC-based consensus protocol, we use the MATLAB software to conduct the simulation again.
The simulation results are displayed in Figs. \ref{fig_state1} - \ref{fig_input}.
From Figs.\ref{fig_state1} to \ref{fig_state3}, it can been observed that the closed-loop system
reaches state consensus, but the consensus point is divergent, differing from that for MASs with semi-stable subsystems.
The control input is shown in Fig. \ref{fig_input}, which implies that the prescribed control input constraints are fulfilled.
As a consequence, the theoretical results for MASs with unstable subsystems are verified.
\begin{figure}[!hbt] \centering
\includegraphics[width=0.5\textwidth]{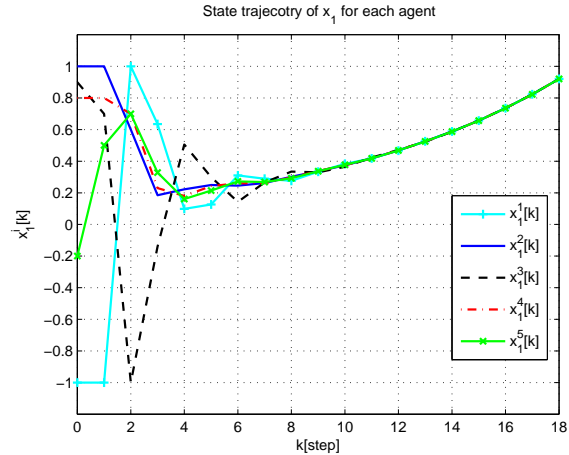}
\caption{The first states for the 5 agents.}\label{fig_state1}
\end{figure}

\begin{figure}[!hbt] \centering
\includegraphics[width=0.5\textwidth]{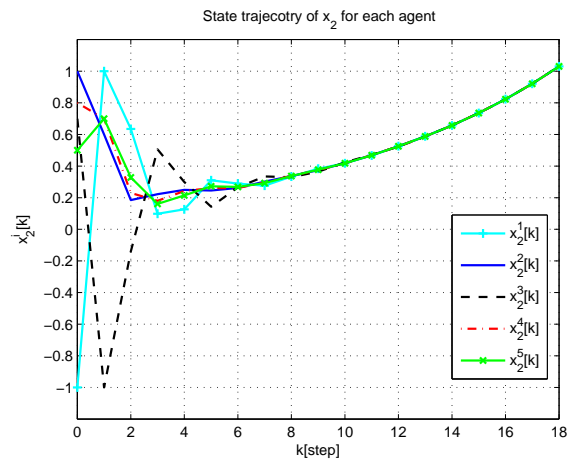}
\caption{The second states for the 5 agents.}\label{fig_state2}
\end{figure}

\begin{figure}[!h] \centering
\includegraphics[width=0.5\textwidth]{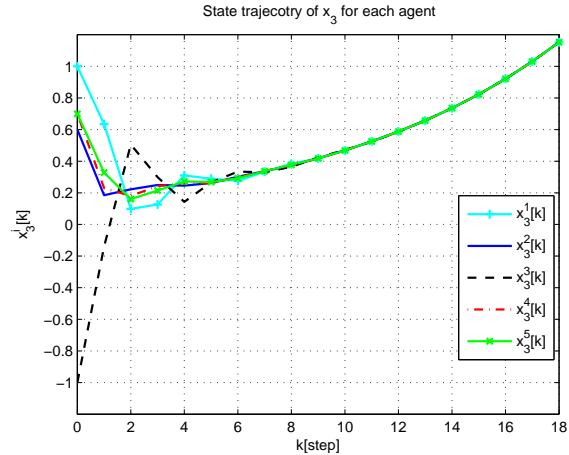}
\caption{The third states for the 5 agents.}\label{fig_state3}
\end{figure}

\begin{figure}[!h] \centering
\includegraphics[width=0.5\textwidth]{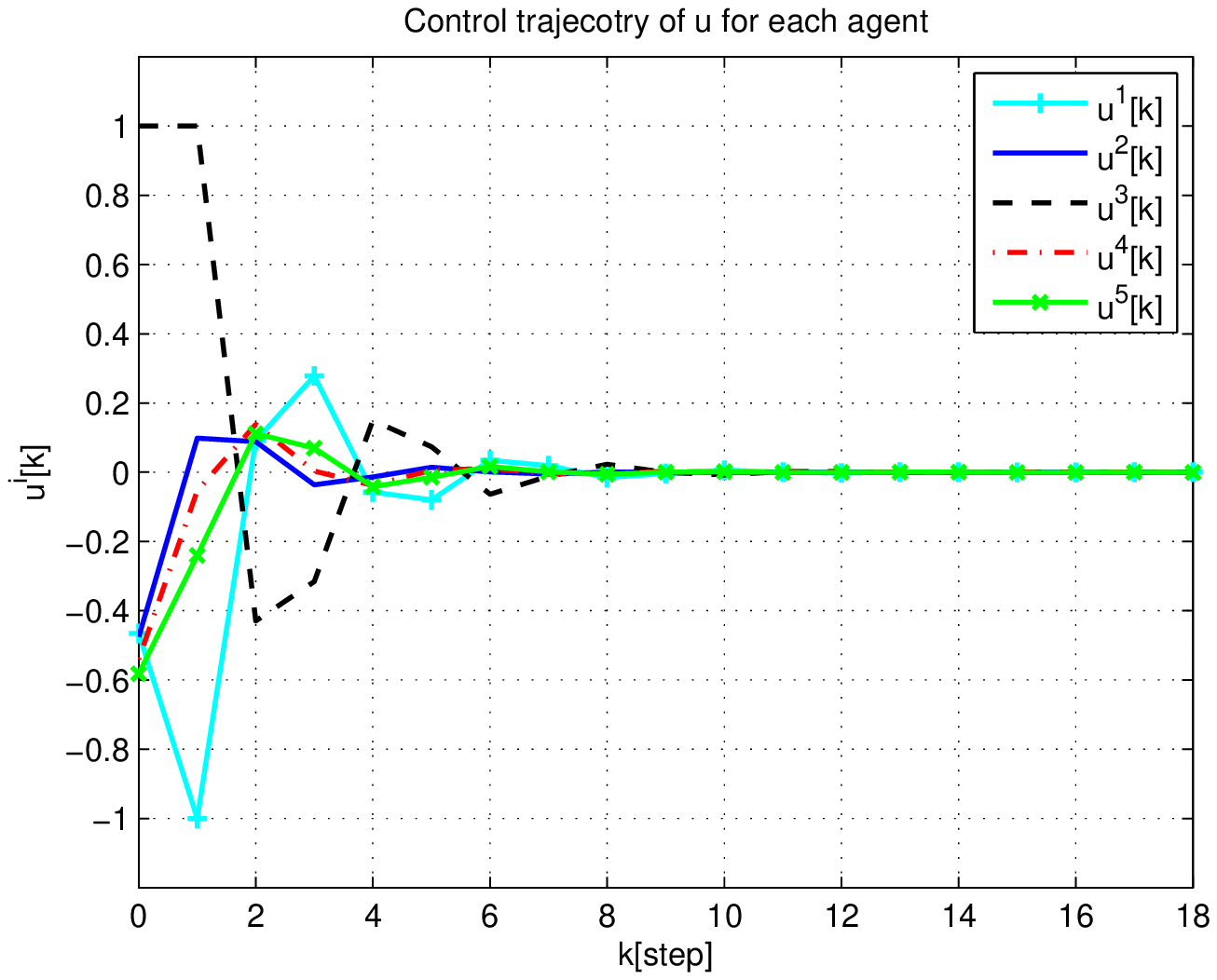}
\caption{The control inputs for the 5 agents.}\label{fig_input}
\end{figure}

\section{CONCLUSIONS}\label{Sec_conclusion}
In this paper, we have studied the RHC-based consensus problem for input-constrained MASs with semi-stable and unstable subsystems.
The results on designing the optimal consensus protocols have been firstly proposed for such two classes of MASs without constraints, respectively. Based on the designed optimal consensus protocols, the RHC-based consensus strategies have been designed.
Furthermore, the feasibility of the designed RHC-based consensus strategies
and the consensus properties of the closed-loop MASs have been analyzed.
It is shown that the achieved global optimal performance indices by the optimal consensus protocol are coupled with the system matrices of each subsystems and the network topology. The designed consensus strategies can make the input constraints fulfilled and the closed-loop system reach consensus. In particular, for the MASs with semi-stable subsystems, the closed-loop system can reach convergent consensus.

%
%
%

\bibliographystyle{IEEEtran}
\bibliography{ref}

\end{document}